\documentclass[12pt]{amsart}
\usepackage{graphicx, xcolor}
\usepackage{amssymb}
\numberwithin{equation}{section}


\def\N{\mathbb{N}}
\def\R{\mathbb{R}}\def\C{\mathbb{C}}


\newtheorem{theorem}{Theorem}[section]
\newtheorem{proposition}[theorem]{Proposition}

\newtheorem{example}[theorem]{Example}
\newtheorem{remark}[theorem]{Remark}
\newtheorem{ans}[theorem]{Definition}

\voffset-.75cm
\hoffset-1.25cm
\textheight21cm
\textwidth15cm

\title{ Slow motion of internal shock layers for the Jin-Xin system in one space dimension}


\begin{document}

\maketitle

\begin{center}
MARTA STRANI\footnote{ Universit\'e Paris-Diderot (Paris 7), Institut de Math\'ematiques de Jussieu, E-mail address: \texttt{strani@dma.ens.fr}, \texttt{martastrani@gmail.com} }
\end{center}
\vskip1cm

\begin{abstract}
This paper considers the slow motion of the shock layer exhibited by the solution to the initial-boundary value problem for a scalar hyperbolic system with relaxation. Such behavior, known as {\it metastable dynamics}, is related to the presence of a first small eigenvalue for the linearized operator around an equilibrium state; as a consequence, the time-dependent solution approaches its steady state in an asymptotically  exponentially long time interval.
 In this contest, both rigorous and asymptotic approaches are used to analyze such slow motion for the Jin-Xin system. 
To describe this dynamics, we derive an  ODE for the position of the internal transition layer, proving how it drifts towards the equilibrium location with a speed rate that is exponentially slow. These analytical results are also validated by numerical computations.
\end{abstract}

\begin{quote}\footnotesize\baselineskip 14pt 
{\bf Key words.} 
Metastability, slow motion, internal layers, relaxation systems. 
 \vskip.15cm
\end{quote}

\pagestyle{myheadings}
\thispagestyle{plain}
\markboth{M. STRANI}{SLOW MOTION OF INTERNAL SHOCK LAYERS FOR THE JIN-XIN SYSTEM}

\section{Introduction}

The slow motion of internal shock layers has been recently widely studied. Such phenomenon, known as \textit {metastability}, is usually related to the presence of a first small eigenvalue for the linearized operator around a given equilibrium state. 
From a general point of view, a metastable behavior appears when solutions exhibit a first time scale in which they are close to some non-stationary state for an exponentially long time before converging to their asymptotic limit. As a consequence, two different time scales emerge: a first transient phase where a pattern of internal shock layers is formed in a $\mathcal O(1)$ time scale, and a subsequent exponentially slow motion where the layers drift toward their asymptotic limits.

A large class of evolution PDEs, concerning many different areas, exhibits this behavior. Among others, we include viscous shock problems (see \cite{LafoOMal94}, \cite{ReynWard95}, \cite{SunWard99}), phase transition problems described by the Allen-Cahn equation, with the fundamental contributions \cite{CarrPego89}, \cite{FuscHale89}, and Cahn-Hilliard equation, studied in \cite{AlikBatFus91} and \cite{Pego89}.

In this paper we mean to study the slow motion of the shock layer for the scalar hyperbolic system with relaxation
\begin{equation}\label{JX}
 \left\{\begin{aligned}
& \partial_t u +\partial_x v=0, \\
&\partial_t v+a^2 \partial_x \phi(u)=\frac{1}{\varepsilon} (f(u)-v), \quad \phi'(u)>0, 
  \end{aligned}\right.
\end{equation}
where the space variable $x$ belongs to a one-dimensional interval $I=(-\ell,\ell)$, $\ell >0$. 
System \eqref{JX} is a particular case of a class of more general hyperbolic relaxation systems of the form
\begin{equation*}
\partial_t \left ( \begin{aligned}
u \\
v
\end{aligned} \right)
+
\partial_x \left ( \begin{aligned}
& g(u,v) \\
& h(u,v)
\end{aligned} \right)=
\left( \begin{aligned}
& 0 \\
\varepsilon^{-1}& q(u,v),
\end{aligned}\right)
\end{equation*}
usually utilized to model a variety of non equilibrium processes in continuum mechanics: for example, non-thermal equilibrium gas dynamics (\cite{Lev96}, \cite{MulRug98}), traffic dynamics (\cite{AwRas00}, \cite{Li00}, \cite{LigWhit55}), and multiphase flows (\cite{BerGarADePVaz97}, \cite{BarVin80}, \cite{NatTes99}). Here $\varepsilon$ is a positive parameter, usually small, determining relaxation time.

In the case of system \eqref{JX}, the parameter $\varepsilon$ can be seen as a  \textit{viscosity coefficient}; we are interested in studying the behavior of the solution to \eqref{JX}  in the limit of small $\varepsilon$, and we want to identify the role of this parameter in the appearance and/or disappearance of phenomena of metastability.  

\vskip0.25cm
The main example we have in mind is the initial-boundary value problem for the quasilinear Jin-Xin system in the bounded interval $I=(-\ell,\ell)$, with Dirichlet boundary conditions, that is
\begin{equation}\label{JXbur}
 \left\{\begin{aligned}
& \partial_t u +\partial_x v=0,  &\qquad &x \in I, \ t \geq 0, \\
&\partial_t v+ a^2 \partial_x u=\frac{1}{\varepsilon} (f(u)-v), \\
& u(\pm \ell,t)=u_{\pm}, &\qquad &t \geq 0,\\
& u(x,0)=u_0(x), \quad v(x,0)=v_0(x) \equiv f(u_0(x)), &\qquad &x \in I,
  \end{aligned}\right.
\end{equation}
for some $\varepsilon, \ell,a>0$, $u_{\pm} \in \R$ and flux function $f$ that satisfies 
\begin{equation}\label{ipof}
f''(u) \geq c_0 >0, \quad f'(u_+) < 0 <f'(u_-), \quad f(u_+)=f(u_-).
\end{equation}
We stress that, once the boundary conditions for the function $u$ are chosen, the boundary conditions for the function $v$ are univocally determined. This model was firstly introduced in \cite{JinXin95} as a numerical scheme approximating solutions of the hyperbolic conservation law $\partial_t u+ \partial_xf(u)=0$. System \eqref{JXbur} is strictly hyperbolic, with the spectrum of the Jacobian $\sigma(dg,dh)^t$ composed by two distinct real eigenvalues $\pm a$.

In the relaxation limit ($\varepsilon \to 0^+$), system \eqref{JXbur} can be approximated to leading order by
\begin{equation}\label{epszero}
 \left\{\begin{aligned}
& \partial_t u+ \partial_x v =0,\\
 & v=f(u),
  \end{aligned}\right.
\end{equation}
that is
\begin{equation}\label{epsilonzero}
\partial_t u + \partial_x f(u)=0,
\end{equation}
together with $v=f(u)$, and complemented with boundary conditions
\begin{equation}\label{conbordo}
u(-\ell,t)=u_- \quad {\rm and} \quad u(\ell,t)=u_+.
\end{equation}
From the standard theory of entropy solutions to first-order quasi-linear equations of hyperbolic type, it is known that  \eqref{epsilonzero} admits a class of solutions, hence possible discontinuous, with speed of propagation $s$ given by the Rankine-Hugoniot condition
\begin{equation*}
s=\frac{[\![ f(u)]\!]}{[\![u]\!]}, 
\end{equation*}
where $[\![ \,\cdot\, ]\!]$ denotes the jump. Assumptions \eqref{ipof} guarantee that the jump from the value $u_-$ to the value $u_+$ is admissible if and only if $u_->u_+$, and its speed of propagation $s$ is equal to zero.  In this case,
equation \eqref{epsilonzero} admits a large class of stationary solutions satisfying the boundary conditions, given by all that piecewise constant functions in the form
\begin{equation*}
 u(x) =  \left\{\begin{aligned}
&u_- \quad x \in (-\ell,x_0), \\
&u_+ \quad x \in (x_0,\ell),
   \end{aligned}\right.
\end{equation*} 
where $x_0$ is a certain point in the interval. Hence, given $\xi \in (-\ell,\ell)$, we can construct a one-parameter family $\{ U_{{}_{\rm hyp}}(\cdot;\xi)\}$ of steady states, parametrized by $\xi$ that represents the location of the jump, and given by
\begin{equation}\label{Uhyp}
 U_{{}_{\rm hyp}}(x;\xi)= u_- \chi_{(-\ell,\xi)}(x)+ u_+ \chi_{(\xi,\ell)}(x),
\end{equation} 
where $\chi_I$ denotes the characteristic function of the interval $I$. We remark that, once $U_{{}_{\rm hyp}}(\cdot;\xi)$ is chosen, the class of stationary solutions $(U_{{}_{\rm hyp}},V_{{}_{\rm hyp}})$ for the original system \eqref{epszero} is given by the relation $V_{{}_{\rm hyp}}=f(U_{{}_{\rm hyp}})$, so that
\begin{equation}\label{Vhyp}
V_{{}_{\rm hyp}}(x;\xi)= f(u_-) \chi_{(-\ell,\xi)}(x)+ f(u_+) \chi_{(\xi,\ell)}(x).
\end{equation}
For the initial-boundary value problem \eqref{epsilonzero}-\eqref{conbordo}, it is possible to prove that every entropy solution converges in finite time to an element of the family  $\{ U_{{}_{\rm hyp}}(\cdot;\xi)\}$. This \textit{ stabilization property} has been proved for the first time in \cite{Liu78}, using the \textit{theory of generalized characteristic}, firstly introduced in \cite{Daf05}. In this framework, assumptions \eqref{ipof} on the flux function $f$ are crucial (see \cite[Theorem 6.1]{MasStr12}).
Hence, every entropy solution to the initial-boundary value problem
\begin{equation*}
\left\{\begin{aligned}
&\partial_t u+\partial_x f(u)=0, \quad v=f(u),\\
&u(\pm \ell,t)=u_\pm,
\end{aligned}\right.
\end{equation*}
converges in finite time to an element of the family $\{ U_{{}_{\rm hyp}}(\cdot;\xi),  V_{{}_{\rm hyp}}(\cdot;\xi)\}$.

\vskip0.25cm
For $\varepsilon >0$, the situation is very different. If we differentiate with respect to $x$ the second equation of \eqref{JXbur}, we obtain
\begin{equation}\label{differenziata}
u_t=\varepsilon(a^2\partial_x^2u-\partial_{tt}u)-\partial_x f(u).
\end{equation}
Thus, stationary solutions to \eqref{JXbur} solve
\begin{equation}\label{1}
a^2\varepsilon \partial_x^2u=\partial_xf(u),
\end{equation}
together with $\partial_x v=0$.  The presence of the Laplace operator has the effect that only a single stationary state is admitted (see \cite{KreissKreiss86}). As an example, we consider the case of Burgers flux, i.e. $a=1$, $f(u)=\frac{1}{2}u^2$. We can explicitly write the stationary solution for the problem \eqref{1}-\eqref{conbordo} as
\begin{equation}\label{Ustatbur}
\bar U_{bur}^\varepsilon(x)=-k\tanh{\left( \frac{kx}{2\varepsilon}\right)},
\end{equation}
where $k=k(\varepsilon, \ell, u_\pm)$ is implicitly defined by imposing the boundary conditions
Moreover, $\bar V_{bur}^\varepsilon(x)$ is defined by $\bar V_{bur}^\varepsilon(x)=f(k)$.

In the limit $\varepsilon \to 0^+$, the single steady state $(\bar U_{bur}^\varepsilon,\bar V_{bur}^\varepsilon)$ converges pointwise to $(U_{{}_{\rm hyp}}(\cdot;0),V_{{}_{\rm hyp}}(\cdot;0))$, while, 
for a class of general $f(u)$ that verify hypotheses \eqref{ipof}, the stationary solution $(\bar U^\varepsilon,\bar V^\varepsilon)$ converges pointwise to $(U_{{}_{\rm hyp}}(\cdot;\bar\xi), V_{{}_{\rm hyp}}(\cdot;\bar\xi))$, for some $\bar\xi \in I$.

Finally, the single steady state $(\bar U^\varepsilon, \bar V^\varepsilon)$ is asymptotically stable (for more details see the spectral analysis performed in Section \ref{3}), i.e. starting from an initial datum close to the equilibrium configuration, the time dependent solution  approaches the steady state for $t\to +\infty$.

\begin{figure}[ht]
\centering
\includegraphics[width=14cm,height=13cm]{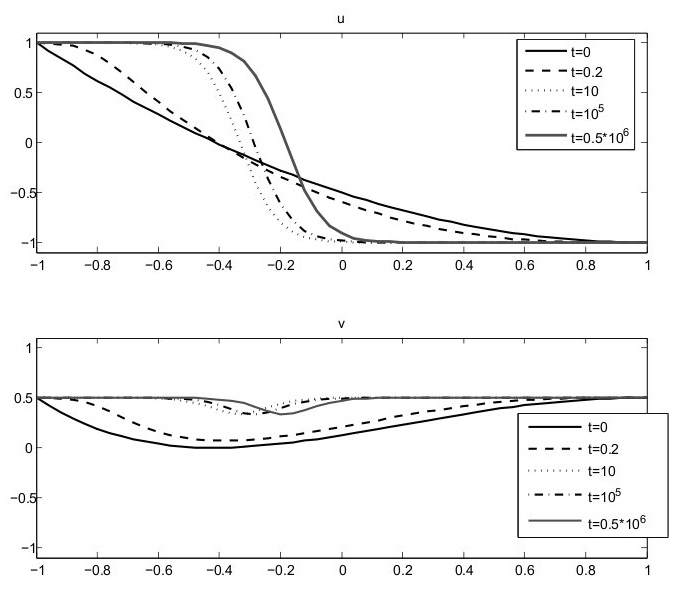}

\caption{\small{Profiles of $(u,v)$, solutions to \eqref{JXbur}, with $f(u)=u^2/2$, a =1 $\varepsilon=0.04$ and $u_{\pm}=\mp1$. The initial datum is given by the couple $(u_0(x),f(u_0(x)))$, with $u_0(x)$ a decreasing function connecting $u_+$ and $u_-$.  Profiles at times $t=0$, $0.2$, $10$, $10^5$, $0.5\times10^6$.}}\label{fig1}
\end{figure}
\vskip0.25cm
Next question is what happens to the dynamics generated by an initial datum localized far from the equilibrium solution $(\bar U^{\varepsilon},\bar V^\varepsilon)$. Numerical computations show that, starting, for example, with a decreasing initial datum $u_0(x)$ (see Fig.\ref{fig1}), because of the viscosity, a shock layer is formed in a $\mathcal O(1)$ time scale. More precisely, the solution generated by such initial datum still presents a smooth transition from $u_-$ to $u_+$, but the shock is located far away from zero, so that the solution is approximately given by a translation of the (unique) stationary solution of the problem. Once the shock layer is formed, it moves towards the equilibrium solution, and this motion is exponentially slow. Thus we have a first transient phase where the shock layer is formed, and an exponentially long time interval where the shock layer approaches the equilibrium solution.

Concerning the function $v$, starting with the initial datum $v_0(x)=f(u_0(x))$, we can observe that the position of the shock of $u$ corresponds to the location of the minimum value of the function $v$; so we have a first transient phase in which the profile of $v$ stabilizes, and an exponentially slow phase where the value of the minimum of such profile drifts towards the value $\xi$ that represents the location of the equilibrium solution for $u$.

The aim of this paper is to study the dynamics generated by an initial datum localized far from the equilibrium solution and to determine a detailed description of the low-viscosity behavior of the solutions.

\vskip0.2cm
To the best of our knowledge, the problem of the slow motion for the hyperbolic-parabolic Jin-Xin system \eqref{JX} has been never investigated before. However, system \eqref{JXbur} can be reduced by differentiation to \eqref{differenziata} together with the equation $\partial_t u+\partial_x v=0$, and, as stressed, the study of stationary solutions to \eqref{differenziata} is the same of that of the scalar conservation law
\begin{equation}\label{SCL}
\partial_t u+\partial_x f(u)= \varepsilon \partial_x^2u,
\end{equation}
together with the additional condition $\partial_x v=0$. 

A pioneering article that analyzes the dynamics of \eqref{SCL} for initial data close to the equilibrium solution has been published by G. Kreiss and H.O. Kreiss \cite{KreissKreiss86}. 
Here the flux function $f$ is given by  $f(u)=u^2/2$, so that equation \eqref{SCL} becomes the so-called \textit{viscous Burgers equation}.
To study the dynamics generated by such initial configurations, the authors consider the linearized equation close to the  unique stationary state $\bar U_{bur}^\varepsilon$ defined in \eqref{Ustatbur} , that is
\begin{equation*}
\partial_t u=\mathcal L_\varepsilon u := \varepsilon \partial_x^2u + \partial_x(b(x)u), \qquad b(x):=-f'(\bar U_{bur}^\varepsilon(x)).
\end{equation*}
In \cite{KreissKreiss86} it is shown that, if $f(u_-)=f(u_+)$, the eigenvalues of $\mathcal L_\varepsilon$ are all real and negative. Moreover
\begin{equation*}
\lambda_{1}^{\varepsilon}= \mathcal O(e^{-1/\varepsilon}) \ \ {\rm and} \ \ \lambda_k^\varepsilon \leq -c/\varepsilon <0 \quad \forall \  k \geq 2.
\end{equation*} 
This precise distribution shows that the large time behavior of solutions is described by terms of order $e^{\lambda_1^\varepsilon t}$, so that the convergence to the asymptotically stable state $\bar U_{bur}^\varepsilon(x)$ is exponentially slow, when $\varepsilon$ is small.

Concerning the phenomenon of metastability for equation \eqref{SCL}, such problem has been examined, among others, in \cite{ReynWard95} and in \cite{LafoOMal95}. Here the different approaches are based either on \textit{projection method} or on \textit{WKB expansion}, but the common aim is to derive an equation for the position of the shock layer $\xi$, considered as a function of time, that describes its slow motion towards the equilibrium location. In both papers, the analysis is carried on at a formal level and numerically validated.

A rigorous analysis has been performed firstly in \cite{deGKar98} (and generalized to the case of non-convex flux in \cite{deGKar01}). There, to study the slow motion of the internal layer, a one-parameter family of functions that approximate the stationary solution is chosen as a family of traveling waves with small velocity.

The phenomenon of metastability for the equation \eqref{SCL} has been analyzed by C. Mascia and M. Strani in \cite{MasStr12}. To study the problem of the slow motion, the authors introduce a one-parameter family of functions $\{ U^\varepsilon(\cdot,\xi)\}_{\xi \in I}$, approximating a stationary solution $\bar U^{\varepsilon}(x)$.

Hence, considering the linearized equation around $U^\varepsilon(\cdot,\xi)$, it is shown that the eigenvalues of the linearized operator verify, for all $\xi \in I$
\begin{equation}
-Ce^{-c/\varepsilon} \leq \lambda_1^\varepsilon(\xi) <0, \qquad \lambda_k^\varepsilon(\xi) \leq -C/\varepsilon \quad \forall k \geq 2.
\end{equation}
Moreover, the position of the shock layer $\xi(t)$ satisfies $|\xi(t)-\bar \xi| \leq |\xi_0|e^{-\beta^\varepsilon t}$, where $\beta^\varepsilon \sim e^{-1/\varepsilon}$.
This estimate shows that the shock layer drifts toward the equilibrium solution with a speed rate proportional to the first eigenvalue $\lambda_1^\varepsilon$, so that this motion is exponentially slow.

Motivated by the analogies among the study of our problem and some results for the scalar conservation law \eqref{SCL}, in this article we follow the approach presented in \cite{MasStr12}.

\vskip0.1cm
 -- We build-up a one parameter family of approximate steady states 
$$\{\textbf{W}^\varepsilon(x;\xi)\}_{\xi \in I}=\{ U^\varepsilon(\cdot;\xi),V^\varepsilon(\cdot;\xi)\}_{\xi \in I}$$
such that $(U^\varepsilon(\cdot;\bar \xi),V^\varepsilon(\cdot,\bar \xi)):=(\bar U^\varepsilon,\bar V^\varepsilon)$ for some $\bar \xi$, and with the additional property that $(U^\varepsilon(\cdot; \xi),V^\varepsilon(\cdot,\xi)) \to (U_{{}_{\rm hyp}}(\cdot;\xi),V_{{}_{\rm hyp}}(\cdot;\xi))$ as $\varepsilon \to 0$ in an appropriate sense. Moreover we require the error
\begin{equation*}
\left ( \begin{aligned} & \mathcal P^\varepsilon_1[\textbf{W}^\varepsilon] \\
& \mathcal P_2^\varepsilon[\textbf{W}^\varepsilon] \end{aligned} \right):= \left ( \begin{aligned}
&-\partial_x V^\varepsilon \\
-a^2 \partial_x& U^\varepsilon+\frac{1}{\varepsilon} (f(U^\varepsilon)-V^\varepsilon)
\end{aligned} \right)
\end{equation*}
to be small in $\varepsilon$ in a sense to be specified.
\vskip0.1cm
 -- We describe the dynamics of the system in a neighborhood of the family $\{ U^\varepsilon(\cdot;\xi),V^\varepsilon(\cdot;\xi)\} $.

Once a set of reference states $( U^\varepsilon(\cdot;\xi),V^\varepsilon(\cdot;\xi))$ is chosen, we determine spectral properties of the linearized operator around such element; moreover we show that, under appropriate hypotheses on how far is an element of the family of approximate steady states from being an exact stationary solution, a metastable behavior appears.

The main difference with respect to \cite{MasStr12} is that here we deal with an hyperbolic system. Hence, since the linearized operator around the reference state $\{ \textbf{W}^\varepsilon\}$  is not necessarily self-adjoint, we have to consider the chance of having complex eigenvalues, and the spectral analysis need much more care.
\vskip0.25cm
This paper is organized as follows.

In Section \ref{2} we propose a construction for the family $\{U^\varepsilon,V^\varepsilon\}$ in the case of the Jin-Xin system. Then, we write the solution as
\begin{equation*}
\left\{\begin{aligned}
u(x,t)&=u^1(x,t)+U^\varepsilon(x;\xi(t)),\\
v(x,t)&=v^1(x,t)+V^\varepsilon(x;\xi(t)).
\end{aligned}\right.
\end{equation*}
Hence we use as new coordinates the position of the shock layer $\xi$, and the perturbation $Y=(u^1,v^1)$. The couple $(\xi,Y)$ turns to solve an ODE-PDE coupled system of equations. We then study and approximation of such system, obtained by linearizing with respect to $Y$ and by keeping the nonlinear dependence on $\xi$, so that the $o(Y)$-terms can be neglected.

\vskip0.1cm
In Section \ref{3}, we analyze spectral properties of the linear operator $\mathcal L^\varepsilon_\xi$ arising from the linearization around $\{U^\varepsilon,V^\varepsilon\}$: we show that the spectrum of $\mathcal L^\varepsilon_\xi$ can be decomposed into three parts: the first eigenvalue is real, negative and $\lambda^{JX}_{1}=\mathcal{O}(e^{-C/\varepsilon})$, $C>0$, hence small as $\varepsilon \to 0^+$; all the other real eigenvalues are of order $-C/\varepsilon$; all the remaining eigenvalues are complex with real and imaginary part less than $-C/\varepsilon$, $C>0$.
Such estimates show that all of the components relative to all of the eigenvectors except the first one have a very fast decay for small $\varepsilon$, so that a slow motion occurs as a consequence of the size of the first eigenvalue.

\vskip0.1cm
In Section \ref{4}, following the idea of \cite{ReynWard95}, we give a precise asymptotic expression for the first eigenvalue $\lambda^{JX}_1$ of $\mathcal L^\varepsilon_\xi$, showing that it is exponentially small for $\varepsilon \to 0$.

\vskip0.1cm
Finally, in Section \ref{5}, by using the spectral analysis performed in the previous Sections, we analyze the system for the couple $(\xi,Y)$; our main result is Theorem \ref{mainthe}, where we prove the following estimate for the $L^2$-norm of the perturbation $Y$
\begin{equation}\label{estintro}
|Y|_{{}_{L^2}}(t) \leq  \left[C_1|\Omega_1^\varepsilon|_{{}_{L^\infty}}+C_2|\Omega^\varepsilon_2|_{{}_{L^\infty}}\right] t+e^{-\mu^\varepsilon t}|Y_0|_{L^2},
\end{equation}
for some constants $C_1$ and $C_2$ independent on $\varepsilon$, and where the terms $\mu^\varepsilon$, $\Omega_1^\varepsilon$ and $\Omega^\varepsilon_2$ are small in $\varepsilon$ in a sense that will be specified in details later on.  Precisely, the perturbation $Y$ has a very fast decay in time, up to a reminder that is bounded by $\Omega^\varepsilon_1$ and $\Omega_2^\varepsilon$, hence small in $\varepsilon$.

Estimate \eqref{estintro} can be used to decouple the system for the variables $(\xi,Y)$. This leads us to the statement of  Proposition \ref{SLBfin}, providing a precise estimate for the variable $\xi(t)$. In particular, we will show that  the shock layer position drifts towards the equilibrium location at a speed rate that becomes smaller as $\varepsilon \to 0$.

Finally, we numerically compute the position of the shock layer $\xi$ at various time, showing that numerical results agree with the analytical results.

\section{General Framework}\label{2}

Let us consider the Jin-Xin system
\begin{equation}\label{JX2}
 \left\{\begin{aligned}
& \partial_t u +\partial_x v=0,  &\qquad &x \in I, \ t \geq 0, \\
&\partial_t v+ a^2 \partial_x u=\frac{1}{\varepsilon} (f(u)-v), \\
& u(\pm \ell,t)=u_{\pm}, &\qquad &t \geq 0,\\
& u(x,0)=u_0(x), \quad v(x,0)=v_0(x) \equiv f(u_0(x)), &\qquad &x \in I,
  \end{aligned}\right.
\end{equation}
for some flux function $f$ chosen so that assumptions \eqref{ipof} hold. System \eqref{JX2} can be rewritten as
\begin{equation}\label{compactform}
\partial_t Z=\mathcal F^\varepsilon[Z], \quad Z \big|_{t=0}=Z_0,
\end{equation}
where
\begin{equation*}
Z=\left ( \begin{aligned}
u \\
v
\end{aligned} \right)
\qquad \mathcal F^\varepsilon[Z] :=\left ( \begin{aligned} & \mathcal P^\varepsilon_1[Z] \\
& \mathcal P_2^\varepsilon[Z] \end{aligned} \right)= \left ( \begin{aligned}
&-\partial_x v \\
-a^2 \partial_x& u+\frac{1}{\varepsilon} (f(u)-v)
\end{aligned} \right).
\end{equation*}
We are interested in studying the behavior of the solution to \eqref{compactform} in the relaxation limit, i.e. $\varepsilon \to 0$.  
We assume that there exists a one-parameter family of functions $$\{ U^\varepsilon(\cdot;\xi), V^\varepsilon(\cdot;\xi)\}_{\xi \in I} $$ such that $(U^\varepsilon(\cdot;\bar \xi),V^\varepsilon(\cdot;\bar \xi))=(\bar U^\varepsilon,\bar V^\varepsilon)$ for some $\bar \xi \in I$, where $(\bar U^\varepsilon,\bar V^\varepsilon)$ is the exact steady state of the system.

When $\xi \neq \bar \xi$, an element of this family can be seen as an approximate stationary solution to the problem, i.e. $\mathcal F^\varepsilon[U^\varepsilon(\cdot;\xi), V^\varepsilon(\cdot;\xi)] \to 0$ as $\varepsilon \to 0$ in an appropriate sense to be specified. Moreover we require that, in the relaxation limit,  $(U^\varepsilon(\cdot;\xi),V^\varepsilon(\cdot;\xi)) \to (U_{{}_{\rm hyp}}(\cdot; \xi), V_{{}_{\rm hyp}}(\cdot,\xi ))$, where $U_{{}_{\rm hyp}}$ and $V_{{}_{\rm hyp}}$ are defined in \eqref{Uhyp}-\eqref{Vhyp}.

Let us stress that, once the one-parameter family of functions $\{ U^\varepsilon(\cdot;\xi) \} $ is chosen, the couple $\{ U^\varepsilon(\cdot;\xi), V^\varepsilon(\cdot;\xi)\}$ is univocally determined by the relation
\begin{equation*}
 V^\varepsilon=-\varepsilon a^2 \partial_x U^\varepsilon+f(U^\varepsilon).
\end{equation*}

\begin{example}\label{ex1}\rm{

In the case of Burgers flux, i.e. $f(u)=\frac{1}{2}u^2$, a stationary solution to \eqref{JX2} satisfies
\begin{equation}\label{burstat}
\varepsilon a^2\partial_x u= \frac{u^2}{2}-\frac{C^2}{2}, \quad v=\frac{C^2}{2}, 
\end{equation}
with boundary conditions $u(\pm \ell)=\mp u^*$, for some $u^*>0$.
An approximate solution $U^\varepsilon(x;\xi)$ to the first equation of  \eqref{burstat} is obtained by matching two different steady states satisfying, respectively, the left and the right boundary conditions together with the request $U^\varepsilon|_{x=\xi}=0$ (see  \cite[Example 2.1]{MasStr12}). In formula
\begin{equation}\label{Ueps}
U^\varepsilon(x;\xi)= \left\{\begin{aligned}
&k_{-}\tanh{(k_{-}(\xi-x)/2\varepsilon)} \quad \rm in \ (-\ell,\xi), \\
&k_{+} \tanh{(k_{+}(\xi-x)/2\varepsilon)} \quad \rm in \ (\xi,\ell),
  \end{aligned}\right.
\end{equation}
where $a=1$, and $k_{\pm}$ are chosen so that the boundary conditions are satisfied
\begin{equation}\label{BCex1}
k_{\pm} \tanh {\left( \frac{k_{\pm}}{2\varepsilon}(\xi \mp \ell)\right)}=u_{\pm}.
\end{equation}
Moreover, by the condition $v=\frac{C^2}{2}$, we have
\begin{equation*}
V^\varepsilon(x;\xi)= \left\{\begin{aligned}
&k^2_- /2 \quad \rm in \ (-\ell,\xi), \\
&k^2_+/2 \quad \rm in \ (\xi,\ell).
  \end{aligned}\right.
\end{equation*}
}
\end{example}

\subsection{The linearized problem }

As already stated before, in order to describe the dynamics generated by an initial configuration localized far from the steady state $(\bar U^\varepsilon,\bar V^\varepsilon)$, we assume to have a one-parameter family 
$$\textbf{W}^{\varepsilon}(x;\xi(t)):=\{U^{\varepsilon}(x;\xi(t)), V^\varepsilon(x;\xi(t))\}_{\xi \in I},$$
parametrized by $\xi(t) \in I$, such that the couple $(U^{\varepsilon}(x;\xi(t)),V^{\varepsilon}(x;\xi(t)))$ is an approximate stationary solution to \eqref{JX2}, in the sense that it satisfies the stationary equation up to an error that is small in $\varepsilon$.
More precisely, following the idea firstly introduced in \cite{MasStr12}, we assume that there exist two families of smooth functions $\Omega_1^\varepsilon=\Omega_1^\varepsilon(\xi)$ and $\Omega_2^\varepsilon=\Omega_2^\varepsilon(\xi)$, uniformly convergent to zero as $\varepsilon \to 0$, such that, for any $\xi \in I$, the following estimates hold
  \begin{equation}\label{ipsufam1}
  \begin{aligned}
&|\langle \psi(\cdot), \mathcal P_1^\varepsilon[\textbf{W}^\varepsilon(\cdot,\xi)] \rangle | \leq |\Omega_1^\varepsilon(\xi)| |\psi|_{_{L^\infty}} \quad \forall \psi \in C(I), \\
&|\langle \psi(\cdot), \mathcal P_2^\varepsilon[\textbf{W}^\varepsilon(\cdot,\xi)] \rangle | \leq |\Omega_2^\varepsilon(\xi)| |\psi|_{_{L^\infty}} \quad \forall \psi \in C(I).
\end{aligned}
       \end{equation}  
Once a one-parameter family $\{ \textbf{W}^{\varepsilon}(\cdot;\xi) \}$ satisfying \eqref{ipsufam1} is chosen, we look for a solution to \eqref{JX2} in the form
        \begin{equation*}
\left \{ \begin{aligned}
u(\cdot, t) &= U^{\varepsilon}(\cdot;\xi(t))+u^1(\cdot,t), \\
v(\cdot,t) & = V^{\varepsilon}(\cdot;\xi(t))+v^1(\cdot,t).
\end{aligned} \right.
       \end{equation*}
Thus we describe the dynamics in a neighborhood of the family  $\{ U^{\varepsilon}(\cdot;\xi), V^{\varepsilon}(\cdot;\xi) \}$ using as coordinates the parameter $\xi$ and a distance vector  $Y=(u^1,v^1)$ , determined by the difference between the solution $(u,v)$ and an element of the approximate family. Substituting in \eqref{JX2}, we obtain
      \begin{equation*}
\left \{ \begin{aligned}
&\partial_t u^1+\partial_\xi  U^{\varepsilon}(\cdot; \xi)\frac{d\xi}{dt}+\partial_x V^{\varepsilon}(\cdot;\xi)+\partial_x v^1=0, \\
& \partial_t v^1+\partial_\xi V^\varepsilon(\cdot;\xi)\frac{d\xi}{dt}+a^2(\partial_x U^{\varepsilon}(\cdot; \xi)+\partial_x u^1)= \frac{1}{\varepsilon} \left\{f(U^{\varepsilon}(\cdot;\xi)+u^1)-V^{\varepsilon}(\cdot;\xi)-\!v^1\right\}.
\end{aligned} \right.
      \end{equation*}
Since $f(U^{\varepsilon}+u^1)=f(U^{\varepsilon})+f'(U^{\varepsilon})u^1+o(u^1)$ , we get
      \begin{equation}\label{1equv}
\left \{ \begin{aligned}
\partial_t u^1&=-\partial_x v^1-\partial_\xi U^{\varepsilon}(\cdot;\xi)\frac{d\xi}{dt}+\mathcal P_1^\varepsilon[\textbf{W}^{\varepsilon} (\cdot;\xi)] ,\\
\partial_t v^1&=-a^2\partial_x u^1+ \frac{1}{\varepsilon} (f'(U^{\varepsilon}(\cdot,\xi))u^1-v^1)-\partial_\xi V^{\varepsilon}(\cdot;\xi)\frac{d\xi}{dt}\\
 &\quad +\mathcal P_2^\varepsilon[\textbf{W}^{\varepsilon}(\cdot,\xi)]+\mathcal Q^\varepsilon[u^1],
\end{aligned} \right.
     \end{equation}
where
     \begin{equation*}
\left \{ \begin{aligned}
\mathcal P_1^\varepsilon[\textbf{W}^{\varepsilon}] &:=-\partial_x V^{\varepsilon}, \\
\mathcal P_2^\varepsilon[\textbf{W}^{\varepsilon}] &:=- a^2 \partial_xU^{\varepsilon}+\frac{1}{\varepsilon}(f(U^{\varepsilon})-V^{\varepsilon}), \\
\mathcal Q^\varepsilon[u]&:=o(u).
\end{aligned} \right.
      \end{equation*}

\begin{example}\label{ex2}
\rm{ Let us recall the Example \ref{ex1}, where we construct an approximate stationary solution for the Jin-Xin system  with $f(u)=u^2/2$ and $a=1$. We want to compute in this specific case
     \begin{equation*}
 \mathcal P_1^\varepsilon[\textbf{W}^\varepsilon(\cdot;\xi)]\!:=\!-\partial_x V^\varepsilon(\cdot;\xi), \quad\!\mathcal P_2^\varepsilon[\textbf{W}^\varepsilon(\cdot;\xi)]\!:= \!-\partial_x U^\varepsilon(\cdot;\xi)+\frac{1}{\varepsilon}\left( (U^{\varepsilon}(\cdot;\xi))^2/2\!-\!V^\varepsilon(\cdot;\xi)\right).
       \end{equation*}
From the explicit formula for $U^\varepsilon(x;\xi)$ given in \eqref{Ueps}, we get $\mathcal P_2^\varepsilon[\textbf{W}^{\varepsilon}] \equiv0$. 

On the other hand,  $-\partial_x V^\varepsilon(x;\xi)=\varepsilon \partial_x^2U^\varepsilon(x;\xi)-\partial_x f(U^\varepsilon(x;\xi))$. By direct substitution, we obtain the identity
   \begin{equation*}
\mathcal P_1^\varepsilon[\textbf{W}^{\varepsilon}(\cdot,\xi)]=[\![\partial_xU^\varepsilon]\!]_{x=\xi}\delta_{x=\xi}
\end{equation*}
in the sense of distributions. We also have
    \begin{equation*}
[\![\partial_x U^\varepsilon]\!]_{x=\xi}=\frac{1}{2\varepsilon}(k_--k_+)(k_-+k_+).
\end{equation*}
In order to determine the behavior of $\mathcal P_1^\varepsilon[\textbf{W}^\varepsilon(\cdot;\xi)]$ for small $\varepsilon$, we need an asymptotic description of the values $k_{\pm}$. Following the idea of \cite{MasStr12}, let us set $k_{\pm}:=\mp u_{\pm}(1+h_{\pm})$ and $\Delta_{\pm}:=\ell \mp \xi$. Relation \eqref{BCex1} becomes
    \begin{equation*}
\tanh{\left( \mp \frac{u_{\pm}\Delta_{\pm}}{2 \varepsilon}(1+h_{\pm})\right)}=\frac{1}{1+h_{\pm}}.
\end{equation*}
Therefore, the values $h_{\pm}$ are both positive and then
    \begin{equation*}
\tanh{\left( \mp \frac{u_{\pm}\Delta_{\pm}}{2\varepsilon}\right)} \leq \frac{1}{1+h_{\pm}},
\end{equation*}
that gives the asymptotic representation
    \begin{equation}\label{hpm}
h_{\pm} \leq \frac{1}{\tanh{(\mp u_{\pm}\Delta_{\pm}/2\varepsilon)}}-1= \frac{2}{e^{\mp u_{\pm}\Delta_{\pm}/\varepsilon}-1}= 2e^{\pm u_{\pm}\Delta_{\pm}/\varepsilon}+ l.o.t.,
\end{equation}
where {\it  l.o.t.} denotes lower order terms. Finally
 \begin{equation*}
[\![\partial_x U^\varepsilon]\!]_{x=\xi}=\frac{1}{2\varepsilon}(k_--k_+)(k_-+k_+)=\frac{u^2_*}{\varepsilon}(h_--h_+)+ l.o.t.,
\end{equation*}
where $u_\pm =\mp u^*$ for some $u^*>0$, so that we end up with
    \begin{equation}\label{asyP1}
[\![\partial_x U^\varepsilon]\!]_{x=\xi}\leq\frac{u^2_*}{\varepsilon}(e^{-u_*(\ell+\xi)/\varepsilon}-e^{-u_*(\ell-\xi)/\varepsilon})+l.o.t.,
\end{equation}
showing that this term is exponentially small for $\varepsilon \to 0$} and it is null when $\xi=0$, that corresponds to the equilibrium location of the shock when $f(u)=u^2/2$.

 In this case, if we neglect the lower order terms, we can write an asymptotic formula for $\Omega_1^\varepsilon$, that is
\begin{equation}\label{omega1eps}
\Omega_1^\varepsilon(\xi) \sim \frac{u^2_*}{\varepsilon}(e^{-u_*(\ell+\xi)/\varepsilon}-e^{-u_*(\ell-\xi)/\varepsilon}).
\end{equation}
Also, from \eqref{asyP1}, it follows that the quantity $|k_--k_+|$ is exponentially
small as $\varepsilon\to 0$, uniformly in any compact subset of $(-\ell,\ell)$;
therefore, for any $\delta\in(0,\ell)$, there exist constants $c_1, c_2>0$, indipendent on $\varepsilon$,
such that
\begin{equation}\label{boundUx}
	\bigl|[\![ \partial_x U^{\varepsilon}]\!]_{{x=\xi}}\bigr|
	\leq c_1\,e^{-c_2/\varepsilon}
	\qquad\qquad \forall\,\xi\in(-\ell+\delta,\ell-\delta).
\end{equation}
In particular, hypothesis \eqref{ipsufam1} is satisfied in the special case of $f(u)=u^2/2$.
 
\vskip0.25cm
We can also numerically compute the limit of the solution $(U^\varepsilon,V^\varepsilon)$ for $\varepsilon \to 0^+$.
For fixed $\xi$, we observe that, as $\varepsilon$ becomes smaller, the transition between $u_-$ and $u_+$ becomes more sharp, while $v$ tends to $f(u^*) \delta_{x=\xi}$, according to the fact that, in the limit $\varepsilon \to 0^+$, the solution $(U^\varepsilon(\cdot;\xi),V^\varepsilon(\cdot;\xi))$ converges to $(U_{{}_{\rm hyp}}(\cdot;\xi), V_{{}_{\rm hyp}}(\cdot;\xi))$ (see Fig. \ref{fig2}).

\begin{figure}[ht]
\centering
\includegraphics[width=15cm,height=13.5cm]{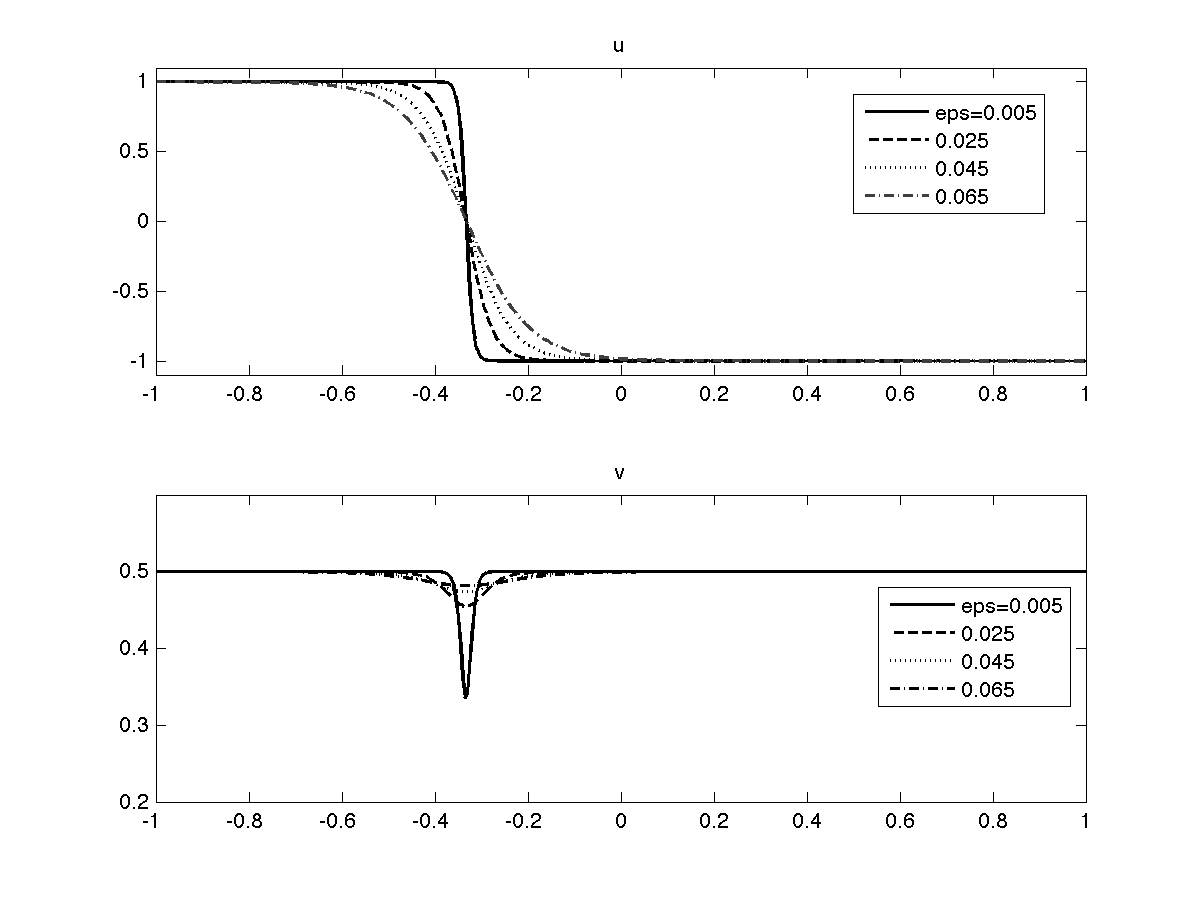}

\caption{\small{Profile of the stationary solution $(u,v)=(U^\varepsilon,V^\varepsilon)$  when $f(u)=u^2/2$. The steepening of the shock layer and the convergence to a Delta function of $v$ as $\varepsilon$ becames smaller are depicted.}}\label{fig2}
\end{figure}

\end{example}

\vskip0.25cm
Let us go back to the system \eqref{1equv}. From now on, in order to simplify the presentation, we set $(u^1,v^1)=(u,v)$ and
\begin{equation}\label{Deflinop}
Y=\left ( \begin{aligned}
u \\
v
\end{aligned} \right),
\qquad 
\mathcal L_\xi^\varepsilon Y:=\left ( \begin{aligned}
&-\partial_x v \\
-a^2\partial_x u +&\frac{1}{\varepsilon} (f'(U^\varepsilon)u-v)
\end{aligned} \right).
\end{equation}
Moreover, we introduce the following notation: if $\psi$, $\phi \in \C$, then $ \langle \psi,\phi \rangle := \int_I \bar \psi \, \phi$, while if $\boldsymbol \psi =(\psi_1,\psi_2)$ and $\boldsymbol \phi=(\phi_1,\phi_2)$, then $ \langle\boldsymbol{ \psi},\boldsymbol{\phi} \rangle := \langle \psi_1,\phi_1\rangle + \langle \psi_2,\phi_2 \rangle$.

\vskip0.2cm
Mimicking the approach of \cite{MasStr12} let us assume that, for any $\xi$, the linear operator $\mathcal{L}_\xi^\varepsilon$ has a sequence of eigenvalues $\lambda^\varepsilon_k=\lambda^\varepsilon_k(\xi)$ with corresponding (right) eigenfunctions $\boldsymbol{\phi}^\varepsilon_k=\boldsymbol{\phi}^\varepsilon_k(\xi,\cdot)$ (for more details see Section 3). Denoting by $\boldsymbol{\psi}^\varepsilon_k=\boldsymbol{\psi}^\varepsilon_k(\xi,\cdot)$ the eigenfunctions of the corresponding adjoint operator $\mathcal{L}_\xi^{\varepsilon^\ast}$ and setting $Y_k=Y_k(\xi;t):=\langle \boldsymbol{\psi}^\varepsilon_k(\cdot;\xi),Y(\cdot,t)\rangle$, we impose that the component $Y_1=\langle \boldsymbol{\psi}^\varepsilon_1(\cdot;\xi),Y(\cdot,t)\rangle$ is identically zero. Indeed, since we will prove that the first eigenvalue $\lambda_1^\varepsilon$ is small in the limit $\varepsilon \to 0$, we set an algebraic condition ensuring orthogonality between $\boldsymbol{\psi}^\varepsilon_1$ and $Y$, in order to remove the singular part of the operator $\mathcal L^\varepsilon_\xi$. Precisely, we impose the first component of the solution $Y_1$ to be zero., so that we solve the equation in a subspace where the operator doesn't vanish.

Thus, denoting by $Y_0$ the initial datum for the perturbation, we have
\begin{equation}\label{hip1}
\frac{d}{dt} \langle \boldsymbol{\psi}^\varepsilon_1(\cdot;\xi(t)), Y(\cdot,t) \rangle =0 \quad \textrm{and} \quad \langle \boldsymbol \psi^\varepsilon_1(\cdot; \xi_0),Y_0(\cdot)\rangle=0,
\end{equation}
so that
\begin{equation*}
\langle \boldsymbol{\psi}^\varepsilon_1(\cdot,\xi),\partial_t Y \rangle + \langle \partial_{\xi}\boldsymbol{\psi}^\varepsilon_1(\cdot,\xi) \frac{d \xi}{d t},Y \rangle =0.
\end{equation*}
Since $\boldsymbol{\psi}^\varepsilon_1=(\psi_1^u,\psi_1^v)$ is the first (left) eigenfunction, there holds $\mathcal L_\xi^{\varepsilon,*} \boldsymbol{\psi}^\varepsilon_1=\lambda_1\boldsymbol{\psi}^\varepsilon_1$, that is
\begin{equation*}
\left ( \begin{aligned}
a^2\partial_x\psi^v_{1} +&\frac{1}{\varepsilon}f'(U^\varepsilon(\cdot;\xi))\psi_1^v\\
\partial_x\psi^u_{1} &-\frac{1}{\varepsilon} \psi_1^v
\end{aligned} \right)=
\lambda_1 \left ( \begin{aligned}
\psi_1^u \\
\psi_1^v
\end{aligned} \right).
\end{equation*}
Hence, from \eqref{hip1} we get
\begin{equation*}
\langle \left ( \begin{aligned}
\partial_\xi\psi^u_1 \frac{d\xi}{dt} \\
\partial_\xi\psi^v_1 \frac{d\xi}{dt}
\end{aligned} \right)\!\!,\!\!
\left ( \begin{aligned}
u \\
v
\end{aligned} \right) \rangle 
+\langle \left ( \begin{aligned}
\psi^u_1 \\
\psi^v_1
\end{aligned} \right)\!\!,
\mathcal L_\xi^\varepsilon \left ( \begin{aligned}
u \\
v
\end{aligned} \right) 
+ \left ( \begin{aligned}
-\partial_\xi U^\varepsilon(\cdot,\xi)& \frac{d\xi}{dt}\!+\!\mathcal P_1^\varepsilon[\textbf{W}^\varepsilon(\cdot,\xi)] \\
-\partial_\xi V^\varepsilon(\cdot,\xi)\frac{d\xi}{dt}\!+\!&\mathcal Q^\varepsilon[u]+\mathcal P_2^\varepsilon[\textbf{W}^\varepsilon(\cdot,\xi)]
\end{aligned} \right) \rangle \!=\!0.
\end{equation*}
Since $\langle \boldsymbol{\psi}^\varepsilon_1, {\mathcal L}^\varepsilon_{\xi}Y \rangle= \lambda^\varepsilon_1\langle \boldsymbol{\psi}^\varepsilon_1, Y \rangle=0$, we have
\begin{equation*}
\begin{aligned}
&\langle \partial_\xi \psi_1^u(\cdot,\xi) \frac{d\xi}{dt},u \rangle +\langle \psi_1^u(\cdot;\xi),-\partial_\xi U^\varepsilon(\cdot,\xi) \frac{d\xi}{dt}+\mathcal P_1^\varepsilon[\textbf{W}^{\varepsilon}(\cdot;\xi)] \rangle  \\
\quad &+\langle \partial_\xi \psi_1^v(\cdot;\xi) \frac{d\xi}{dt},v \rangle + \langle \psi_1^v(\cdot;\xi),-\partial_\xi V^\varepsilon(\cdot,\xi)\frac{d\xi}{dt}+\mathcal Q^\varepsilon[u]+\mathcal P_2^\varepsilon[\textbf{W}^{\varepsilon}(\cdot,\xi)] \rangle =0,
\end{aligned}
\end{equation*}
and we end up with a scalar differential equation for the variable $\xi$, that is
\begin{equation}\label{1eqxi}
 \begin{aligned}
\frac{d\xi}{dt} = \frac{\langle \psi_1^v(\cdot;\xi),\mathcal Q^\varepsilon[u]+\mathcal P_2^\varepsilon[\textbf{W}^{\varepsilon}(\cdot,\xi)]\rangle+\langle \psi_1^u(\cdot,\xi), \mathcal P_1^\varepsilon[\textbf{W}^\varepsilon(\cdot;\xi)] \rangle}{\alpha^\varepsilon(\xi,u,v)} ,\\
\end{aligned} 
\end{equation}
where
\begin{equation*}
\alpha^\varepsilon(\xi,u,v) = - \langle \partial_\xi \psi_1^u (\cdot,\xi),u \rangle -\langle \partial_\xi \psi_1^v(\cdot;\xi) ,v \rangle+\langle \psi_1^u(\cdot;\xi),\partial_\xi U^\varepsilon \rangle+\langle \psi_1^v(\cdot;\xi),\partial_\xi V^\varepsilon \rangle.
\end{equation*}
Since we are interested in the regime $Y \sim 0$, the equation \eqref{1eqxi} is approximately solved for small $Y$. Thus the term $1/\alpha^\varepsilon(\xi,u,v)$ is expanded for $u,v \sim 0$, yielding
      \begin{equation*}
\begin{aligned}
\frac{1}{\alpha^\varepsilon(\xi,Y)}
&= \frac{1}{\langle \boldsymbol{\psi}^\varepsilon_1(\cdot;\xi), \partial_{\xi}\textbf{W}^{\varepsilon} \rangle}+ \frac{1}{\langle\boldsymbol{\psi}^\varepsilon_1(\cdot;\xi), \partial_{\xi}\textbf{W}^{\varepsilon} \rangle^2}\langle \partial_{\xi} \boldsymbol{\psi}^\varepsilon_1(\cdot;\xi),Y \rangle + R_1,\\
R_1 &=\frac{1}{\langle \boldsymbol{\psi}^\varepsilon_1(\cdot;\xi), \partial_{\xi}\textbf{W}^{\varepsilon} \rangle \!- \!\langle \partial_{\xi} \boldsymbol{\psi}^\varepsilon_1(\cdot;\xi),Y \rangle}\!-\!\frac{1}{\langle \boldsymbol{\psi}^\varepsilon_1(\cdot;\xi), \partial_{\xi}\textbf{W}^{\varepsilon} \rangle}\!-\! \frac{\langle \partial_{\xi} \boldsymbol{\psi}^\varepsilon_1(\cdot;\xi),Y \rangle}{\langle\boldsymbol{\psi}^\varepsilon_1(\cdot;\xi), \partial_{\xi}\textbf{W}^{\varepsilon} \rangle^2} \\
&= \frac{\langle \partial_{\xi}\boldsymbol{\psi}^\varepsilon_1(\cdot;\xi),Y\rangle^2}{\big[  \langle \boldsymbol{\psi}^\varepsilon_1(\cdot;\xi),\partial_{\xi}\textbf{W}^{\varepsilon} \rangle- \langle \partial_{\xi} \boldsymbol{\psi}^\varepsilon_1,(\cdot;\xi) Y \rangle  \big] \langle \boldsymbol{\psi}^\varepsilon_1(\cdot;\xi), \partial_{\xi}\textbf{W}^{\varepsilon} \rangle^2},
\end{aligned}
\end{equation*}
where 
\begin{equation*}
\langle \boldsymbol{\psi}^\varepsilon_1(\cdot;\xi), \partial_{\xi}\textbf{W}^{\varepsilon} \rangle:=\langle \psi^u_1(\cdot;\xi), \partial_{\xi}U^{\varepsilon} \rangle+\langle \psi^v_1(\cdot;\xi), \partial_{\xi}V^{\varepsilon} \rangle.
\end{equation*}
Now, for sake of simplicity, let us call $\alpha_0^\varepsilon(\xi):=\langle \boldsymbol{\psi}^\varepsilon_1(\cdot;\xi),\partial_\xi \textbf{W}^\varepsilon(\cdot,\xi) \rangle$.
Thus we end up with the nonlinear equation for $\xi(t)$, which reads
     \begin{equation}\label{eqxiNL}
\frac{d\xi}{dt}=\theta^\varepsilon(\xi)  \left( 1 +\frac{ \langle \partial_\xi \boldsymbol{\psi}^\varepsilon_1,Y\rangle}{\alpha_0^\varepsilon(\xi)} \right) +\rho^\varepsilon[\xi,Y], \qquad \langle \boldsymbol \psi^\varepsilon_1(\cdot; \xi_0), Y_0(\cdot) \rangle=0,
\end{equation}
where
       \begin{equation}\label{def}
 \left\{\begin{aligned}
 &\theta^\varepsilon(\xi):=\frac{ \langle \boldsymbol{\psi}^\varepsilon_1,{\mathcal F^\varepsilon[\textbf{W}^{\varepsilon}] \rangle}}{\alpha_0^\varepsilon(\xi)},\\
 &\rho^\varepsilon[\xi,Y]:=\theta_1(\xi,Y) \left ( 1 + \frac{\langle \partial_\xi \boldsymbol{\psi}^\varepsilon_1,Y\rangle}{\alpha_0^\varepsilon(\xi)} \right )+ \langle \boldsymbol{\psi}^\varepsilon_1, {\mathcal F}^\varepsilon[\textbf{W}^{\varepsilon}]+ { \boldsymbol{\mathcal Q}^\varepsilon[Y]} \rangle R_1,\\
 &R_1 := \frac{\langle \partial_\xi \boldsymbol{\psi}^\varepsilon_1(\cdot;\xi),Y\rangle^2}{[\alpha_0^\varepsilon(\xi)-\langle \partial_\xi \boldsymbol{\psi}^\varepsilon_1(\cdot;\xi),Y\rangle] (\alpha_0^\varepsilon(\xi))^2}, \\
&\theta_1(\xi,v): = \frac{\langle\boldsymbol{\psi}^\varepsilon_1, {\boldsymbol{\mathcal Q}^\varepsilon[Y]}\rangle}{\alpha_0^\varepsilon(\xi)} \\
&\boldsymbol{\mathcal Q}^\varepsilon[Y]= (0,\mathcal Q^\varepsilon[u]), \\
\end{aligned}\right. 
    \end{equation}
and $\mathcal F^\varepsilon$ is defined as in \eqref{compactform}. Equation \eqref{eqxiNL} has to be coupled with the equation for the perturbation $Y$. To this end, \eqref{1equv} is rewritten in the form
\begin{equation}\label{compatta}
\begin{aligned}
\partial_tY&=\mathcal L_\xi^\varepsilon Y -\partial_\xi \textbf{W}^\varepsilon(\cdot; \xi)  \frac{d  \xi}{dt}+ \mathcal F^\varepsilon[\textbf{W}^\varepsilon] + \boldsymbol{\mathcal Q}^\varepsilon[Y].\\
\end{aligned}
\end{equation}
Using \eqref{eqxiNL}, we end up with the following equation 
\begin{equation}\label{equvNL}
\partial_tY=(\mathcal L_\xi^\varepsilon +\mathcal M_\xi^\varepsilon)Y +H^\varepsilon(x;\xi)+\mathcal R^\varepsilon[Y,\xi],
\end{equation}
where
\begin{equation*}
\begin{aligned}
\mathcal M_\xi^\varepsilon Y &=\frac{1}{\alpha_0^\varepsilon(\xi)}\left ( \begin{aligned}
-\partial_\xi U^\varepsilon(\cdot;\xi) \,\theta^\varepsilon(&\xi)\,\langle \partial_\xi \boldsymbol{\psi}^\varepsilon_1(\cdot;\xi), Y \rangle\\
-\partial_\xi V^\varepsilon(\cdot;\xi) \,\theta^\varepsilon(&\xi)\, \langle \partial_\xi \boldsymbol{\psi}^\varepsilon_1(\cdot;\xi), Y \rangle \end{aligned}\right), \\
H^\varepsilon(x;\xi)&=\left( \begin{aligned}
\mathcal P_1^\varepsilon[ \textbf{W}^\varepsilon(\cdot;\xi)]&-\partial_\xi U^\varepsilon(\cdot,\xi) \theta^\varepsilon(\xi) \\
 \mathcal P_2^\varepsilon[\textbf{W}^\varepsilon(\cdot,\xi)]&-\partial_\xi V^\varepsilon(\cdot,\xi) \theta^\varepsilon(\xi) \end{aligned} \right), \\
\mathcal R^\varepsilon[Y,\xi]&=\left( \begin{aligned}
-\partial_\xi U^\varepsilon (&\cdot;\xi)\,\rho^\varepsilon[\xi,Y] \\
-\partial_\xi V^\varepsilon (\cdot;\xi)\,&\rho^\varepsilon[\xi,Y]+\mathcal Q^\varepsilon[u] \end{aligned} \right).
\end{aligned} 
\end{equation*}
Hence we obtain the following coupled system for the shock layer location $\xi(t)$ and the perturbation $Y$
 \begin{equation}\label{xiYcompleto}
\left\{\begin{aligned}
\frac{d\xi}{dt}&=\theta^\varepsilon(\xi)  \left( 1 + \frac{ \langle \partial_\xi \boldsymbol{\psi}^\varepsilon_1,Y\rangle}{\alpha_0^\varepsilon(\xi)} \right) +\rho^\varepsilon[\xi,Y], \\
\partial_t Y&=(\mathcal L_\xi^\varepsilon +\mathcal M_\xi^\varepsilon)Y +H^\varepsilon(x;\xi)+\mathcal R^\varepsilon[\xi,Y].
\end{aligned}\right.
   \end{equation}

  \begin{example}\label{ex3}\rm{
Let us consider the Jin-Xin system, for which one obtains
\begin{equation*}
\left\{\begin{aligned}
&\mathcal P_1^\varepsilon [\textbf{W}^\varepsilon]=-\partial_x V^\varepsilon(\cdot;\xi), \\
&\mathcal P_2^\varepsilon [\textbf{W}^\varepsilon]=-a^2\partial_{x}U^\varepsilon(\cdot;\xi)+\frac{1}{\varepsilon}(f(U^\varepsilon(\cdot;\xi))-V^\varepsilon(\cdot;\xi)). 
\end{aligned}\right.
\end{equation*}
For what concerns the linear operator, setting $b^\varepsilon(x;\xi):=f'(U^\varepsilon(\cdot,\xi)) $, and recalling the definition of $\mathcal L^\varepsilon_\xi$ given in \eqref{Deflinop}, we get the following expression for the adjoint operator $\mathcal L_\xi^{\varepsilon,*}$
\begin{equation*}
\mathcal L_\xi^{\varepsilon,*} Y:=\left ( \begin{aligned}
a^2 \partial_x v\, +\, \frac{1}{\varepsilon}&b^\varepsilon(\cdot;\xi)v \\
 \partial_x u\, - \,&\frac{1}{\varepsilon} v
\end{aligned} \right),
\end{equation*}
complemented with Dirichlet boundary conditions. 
To obtain an asymptotic expression for the function $\theta^\varepsilon(\xi)$, we  need to approximately compute the functions $\boldsymbol\psi^\varepsilon_1$ and $\partial_\xi \textbf{W}^\varepsilon$. As usual, we refer to the case $f(u)=u^2/2$.

For $\varepsilon \sim 0$, the function $\boldsymbol\psi^\varepsilon_1$ is close to the eigenfunction $\boldsymbol\psi^0_1=(\psi_1^{0,u},\psi_1^{0,v})$ of the operator $\mathcal L^{0,*}_\xi$ relative to the eigenvalue $\lambda=0$, with
\begin{equation*}
b^0(x;\xi):= u_- \chi_{(-\ell,\xi)}(x)+u_+ \chi_{(\xi,\ell)}(x).
\end{equation*}
For example, in $(-\ell,\xi)$ we have
\begin{equation*}
\left\{ \begin{aligned} 
&a^2\partial_x \psi_1^{0,v}+\frac{u_-}{\varepsilon}\psi_1^{0,v}=0, \\
&\partial_x \psi_1^{0,u}-\frac{1}{\varepsilon}\psi_1^{0,v}=0, \\
& {\psi}_1^{0,u}(-\ell)=0, \quad [\![ {\psi}_1^{0,u} ]\!]_{\xi}=0,
  \end{aligned}\right.
\end{equation*}
that is  $\psi_1^{0,u}=A(1-e^{-\frac{u_-}{a^2\varepsilon}(x+x_0)})$ and $\psi_1^{0,v}=\varepsilon \partial_x\psi_1^{0,u}$, where $x_0$ is an integration constant. By imposing the conditions on the boundary and on the jump, and by doing the same computations in the interval $(\xi,\ell)$, we obtain
      \begin{equation*}
\psi^u_1(x)\sim \psi^{0,u}_1(x)=\left\{ \begin{aligned} 
&(1-e^{u_+(\ell-\xi)/a^2\varepsilon})(1-e^{-u_-(\ell+x)/a^2\varepsilon}) \quad x<\xi, \\
&(1-e^{-u_-(\ell+\xi)/a^2\varepsilon})(1-e^{u_+(\ell-x)/a^2\varepsilon})  \quad x > \xi,
  \end{aligned}\right.
\end{equation*}
    \begin{equation*}
\psi^v_1(x)\sim \psi^{0,v}_1(x)=\left\{ \begin{aligned} 
&\frac{u_-}{a^2}(1-e^{u_+(\ell-\xi)/a^2\varepsilon})e^{-u_-(\ell+x)/a^2\varepsilon} \quad x<\xi, \\
&-\frac{u_+}{a^2}(1-e^{-u_-(\ell+\xi)/a^2\varepsilon})e^{e^{u_+(\ell-x)/a^2\varepsilon}}  \quad x > \xi,
  \end{aligned}\right.
\end{equation*}
so that $\boldsymbol\psi^\varepsilon_1 =(\psi_1^u,\psi_1^v) \sim (1,0)$ for $\varepsilon \sim 0$. Furthermore, with the approximation $U^\varepsilon(x;\xi)\sim U_{{}_{\rm hyp}}(x;\xi)$ and $V^\varepsilon(x;\xi) \sim V_{{}_{\rm hyp}}(x)$, we have
     \begin{equation*}
\begin{aligned}
&\frac{U^\varepsilon(x;\xi+h)-U^\varepsilon(x;\xi)}{h} \sim -\frac{1}{h}[\![u]\!]\chi_{_{(\xi,\xi+h)}}(x),\\
&\frac{V^\varepsilon(x;\xi+h)-V^\varepsilon(x;\xi)}{h} \sim -\frac{1}{h}[\![f(u)]\!]\chi_{_{(\xi,\xi+h)}}(x),
\end{aligned}
\end{equation*}
so that $\partial_\xi U^\varepsilon$ and $\partial_\xi V^\varepsilon$ converge to $-[\![u]\!]\delta_\xi$ and $-[\![f(u)]\!]\delta_{\xi}$ respectively as $\varepsilon \to 0$ in the sense of distributions. Thus, since $\langle \boldsymbol\psi^\varepsilon_1,\partial_\xi \textbf{W}^\varepsilon \rangle \sim -[\![u]\!]$, we deduce an asymptotic expression for the function $\theta^\varepsilon$
\begin{equation*}
\theta^\varepsilon(\xi)\sim -\frac{1}{[\![u]\!]}\langle 1, \mathcal P_1^\varepsilon[\textbf{W}^\varepsilon] \rangle.
\end{equation*}
With the choice of $\textbf{W}^\varepsilon=(U^\varepsilon,V^\varepsilon)$ proposed in Example \ref{ex1}, such expression becomes
\begin{equation}\label{stimasutheta}
\theta^\varepsilon(\xi) \sim \frac{u^*}{\varepsilon}(e^{-u_*(l+\xi)/\varepsilon}-e^{-u_*(l-\xi)/\varepsilon}).
\end{equation} }

\end{example}

\section{Spectral analysis}\label{3} 

In this section we analyze the spectrum of the linearized operator $\mathcal L_\xi^\varepsilon$ in order to determine a precise description of the location of the eigenvalues.

We recall that $\mathcal L^\varepsilon_\xi$ has been defined in \eqref{Deflinop},
so that the eigenvalue problem $\mathcal L_\xi^\varepsilon \Phi =\lambda \Phi$ reads
\begin{equation*}
\left \{ \begin{aligned}
\lambda \varphi &=-\partial_x \psi,\\
\lambda \psi &= -a^2 \partial_x \varphi+ \frac{1}{\varepsilon} (f'(U^{\varepsilon})\varphi-\psi),
\end{aligned} \right.
\end{equation*}
complemented with Dirichlet boundary conditions. Hence, by differentiating the second equation with respect to $x$, we obtain
\begin{equation}\label{equivaut}
\varepsilon a^2 \partial_x^2 \varphi-\partial_x(f'(U^\varepsilon)\varphi)=\lambda(1+\varepsilon \lambda)\varphi. 
\end{equation}
Then we are interested in studying the eigenvalue problem for the linear differential diffusion-transport operator 
\begin{equation}\label{ldto}
\mathcal L^{\varepsilon,vsc} \varphi := \varepsilon a^2 \partial_x^2 \varphi-\partial_x (b^\varepsilon \varphi), \quad b^\varepsilon(x;\xi):=f'(U^\varepsilon(x;\xi)).
\end{equation}
In \cite{MasStr12} it is proven that, under appropriate hypotheses on the behavior of the function $b^\varepsilon(x;\xi)$ in the limit $\varepsilon \to 0$, the eigenvalues of $\mathcal L^{\varepsilon,vsc}$ have the following distribution
\begin{equation*}
-Ce^{-c/\varepsilon} \leq \lambda_1^{vsc} <0 \quad {\rm and} \quad \lambda_k^{vsc} \leq -\frac{C}{\varepsilon} \quad \forall \, k \geq 2.
\end{equation*}
More precisely, the following Propositions  are proven in \cite{MasStr12} (for more details, see \cite[Proposition 4.1 and Proposition 4.3]{MasStr12}).

\begin{proposition}\label{citata1}
Let $b^\varepsilon$ be a family of functions satisfying the assumption:

\quad \textbf {A0}. There exists a constant $C_0>0$, independent on $\varepsilon >0$, such that
\begin{equation*}
|b^\varepsilon|_{_{L^\infty}}+ \varepsilon \left| \frac{db^\varepsilon}{dx}\right|_{_{L^\infty}} \leq C_0.
\end{equation*} 
If there exist $\xi \in (-\ell,\ell)$, $b_+ < 0 < b_-$ and a constant $C_1>0$ for which $|b^\varepsilon-b^0|_{_{L^1}} \leq C_1\varepsilon$, where $b^0$ is the step function jumping from $b^-$ to $b^+$, then there exist constants $C,c>0$ such that $-Ce^{-c/\varepsilon} \leq \lambda_1^{vsc} < 0$.

\end{proposition}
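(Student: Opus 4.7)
The plan is to conjugate $\mathcal{L}^{\varepsilon,vsc}$ into self-adjoint Sturm--Liouville form and then to combine an exclusion argument at $\lambda=0$ with a carefully chosen trial function for the resulting Rayleigh quotient. Introducing the integrating factor
\[
\mu(x):=\exp\!\Bigl(-\tfrac{1}{\varepsilon a^{2}}\!\int_{0}^{x}\!b^{\varepsilon}(y)\,dy\Bigr),
\]
a direct computation (using $\varepsilon a^{2}\mu'=-\mu b^{\varepsilon}$) shows that $\mathcal{L}^{\varepsilon,vsc}\varphi=\lambda\varphi$ is equivalent to
\[
\bigl(\varepsilon a^{2}\mu\varphi'\bigr)'-\mu(b^{\varepsilon})'\varphi=\lambda\mu\varphi,\qquad \varphi(\pm\ell)=0.
\]
Classical Sturm--Liouville theory then provides a real decreasing sequence of simple eigenvalues together with the variational characterisation
\[
-\lambda_{1}^{vsc}=\inf_{\varphi\in H^{1}_{0}(I)\setminus\{0\}}\frac{\int_{I}\mu\bigl[\varepsilon a^{2}(\varphi')^{2}+(b^{\varepsilon})'\varphi^{2}\bigr]\,dx}{\int_{I}\mu\,\varphi^{2}\,dx}.
\]

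To obtain the strict inequality $\lambda_{1}^{vsc}<0$, I would first show that $0$ is not an eigenvalue. Indeed $\mathcal{L}^{\varepsilon,vsc}\varphi=0$ rewrites as $\varepsilon a^{2}\varphi'-b^{\varepsilon}\varphi\equiv C$ for some constant $C$, and solving this first-order equation with $\varphi(-\ell)=0$ gives the explicit formula $\varphi(x)=\frac{C}{\varepsilon a^{2}\mu(x)}\int_{-\ell}^{x}\mu(y)\,dy$; the remaining condition $\varphi(\ell)=0$ together with $\int_{-\ell}^{\ell}\mu>0$ forces $C=0$, hence $\varphi\equiv 0$. Combining this with the a priori bound $\lambda_{1}^{vsc}\le 0$ coming from the dissipative structure of the parabolic problem $\partial_{t}u=\mathcal{L}^{\varepsilon,vsc}u$ (standard energy estimate in the natural weighted space), one deduces $\lambda_{1}^{vsc}<0$.

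The lower bound $\lambda_{1}^{vsc}\ge -Ce^{-c/\varepsilon}$ is then reduced, via the variational formula above, to exhibiting a trial function $\varphi_{\ast}\in H^{1}_{0}(I)$ yielding an exponentially small Rayleigh quotient. The natural candidate is
\[
\varphi_{\ast}(x):=\chi(x)\,\partial_{x}U^{\varepsilon}(x;\xi),
\]
where $\chi$ is a smooth cutoff, equal to $1$ on $(-\ell+\delta,\ell-\delta)$ and vanishing at the endpoints, and $\xi$ is the transition point of $b^{0}$: differentiation of the (approximate) stationary equation formally gives $\mathcal{L}^{\varepsilon,vsc}(\partial_{x}U^{\varepsilon})\approx 0$, while the estimates \eqref{hpm}--\eqref{boundUx} show that $\partial_{x}U^{\varepsilon}$ is of size $e^{-c/\varepsilon}$ outside an $O(\varepsilon)$-neighbourhood of $\xi$. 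Consequently, the numerator picks up only exponentially small contributions, from the boundary strips where $\chi'\neq 0$ and from a neighbourhood of the jump of $b^{0}$, while the denominator $\int_{I}\mu\chi^{2}(\partial_{x}U^{\varepsilon})^{2}$ stays bounded away from zero because $\partial_{x}U^{\varepsilon}$ concentrates where $\mu$ is of order one.

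The main obstacle is the quantitative bookkeeping: the weight $\mu$ varies by a factor $e^{c/\varepsilon}$ across the interval, so the exponentially large and exponentially small regions of $\mu$ must be matched accurately against the concentration of $\varphi_{\ast}$. The most delicate term is $\int_{I}\mu(b^{\varepsilon})'\varphi_{\ast}^{2}\,dx$, since $(b^{\varepsilon})'$ approaches the distributional jump $(b_{+}-b_{-})\delta_{x=\xi}$ as $\varepsilon\to 0$; it is exactly here that assumption \textbf{A0} together with the $L^{1}$-closeness $|b^{\varepsilon}-b^{0}|_{L^{1}}\le C_{1}\varepsilon$ enter, permitting one to replace $(b^{\varepsilon})'$ by a controlled mollification and to balance it against the positive Dirichlet energy $\varepsilon a^{2}\int_{I}\mu(\varphi_{\ast}')^{2}$, leaving only an $O(e^{-c/\varepsilon})$ residual.
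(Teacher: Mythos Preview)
The paper does not actually prove this proposition; it is quoted from \cite{MasStr12} (Propositions~4.1 there), so there is no in-text argument to compare against. I therefore comment only on the soundness of your sketch.

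There are two concrete gaps. First, your trial function $\varphi_\ast=\chi\,\partial_x U^\varepsilon(\cdot;\xi)$ is not available in the setting of the proposition: the statement is formulated for an \emph{abstract} family $b^\varepsilon$ satisfying \textbf{A0} and $|b^\varepsilon-b^0|_{L^1}\le C_1\varepsilon$, with no profile $U^\varepsilon$ in the data (and even in the concrete application $\partial_xU^\varepsilon$ is only an \emph{approximate} null vector, with a distributional jump at $\xi$, cf.\ \eqref{boundUx}). The intrinsic replacement is $\varphi_\ast=\chi/\mu$, i.e.\ $\chi(x)\exp\bigl(\tfrac{1}{\varepsilon a^2}\int_0^x b^\varepsilon\bigr)$, which satisfies $\mathcal L^{\varepsilon,vsc}(1/\mu)\equiv 0$ identically and is concentrated near $\xi$ precisely because $b^0>0$ on $(-\ell,\xi)$ and $b^0<0$ on $(\xi,\ell)$. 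With this choice the numerator of the Rayleigh quotient reduces exactly to the boundary-layer contribution from $\chi'$, and the hypothesis $|b^\varepsilon-b^0|_{L^1}\le C_1\varepsilon$ is what allows you to replace $\int b^\varepsilon$ by $\int b^0$ in the exponents with only an $O(1)$ multiplicative error; this is how the exponential smallness emerges cleanly, without having to ``balance'' the singular term $\int\mu(b^\varepsilon)'\varphi_\ast^2$ against the Dirichlet energy.

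Second, the step ``$\lambda_1^{vsc}\le 0$ by dissipativity'' does not go through as stated: in your weighted form the energy identity is
\[
\tfrac{d}{dt}\tfrac12\!\int_I\!\mu u^2\,dx=-\!\int_I\!\mu\bigl[\varepsilon a^2(u')^2+(b^\varepsilon)'u^2\bigr]\,dx,
\]
and $(b^\varepsilon)'$ is large and \emph{negative} near $\xi$ (of size $-1/\varepsilon$ by \textbf{A0}), so the right-hand side has no sign. A clean substitute is to integrate the eigenvalue equation $\mathcal L^{\varepsilon,vsc}\varphi_1=\lambda_1^{vsc}\varphi_1$ over $I$: since $\mathcal L^{\varepsilon,vsc}\varphi=(\varepsilon a^2\varphi'-b^\varepsilon\varphi)'$ and the principal eigenfunction $\varphi_1$ may be taken positive (Krein--Rutman),
\[
\lambda_1^{vsc}\!\int_I\!\varphi_1\,dx
=\bigl[\varepsilon a^2\varphi_1'-b^\varepsilon\varphi_1\bigr]_{-\ell}^{\ell}
=\varepsilon a^2\bigl(\varphi_1'(\ell)-\varphi_1'(-\ell)\bigr)<0
\]
by the Hopf lemma, giving $\lambda_1^{vsc}<0$ directly and rendering your separate exclusion of $\lambda=0$ unnecessary.
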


\begin{proposition}\label{citata2}
Let $b^\varepsilon$ be a family of functions satisfying the assumptions:

\quad \textbf {A1}. $b^\varepsilon \in C^0[-\ell,\ell]$, $b^\varepsilon$ is twice  differentiable at any $x \neq \xi$ and
\begin{equation*}
\begin{aligned}
&\frac{db^\varepsilon}{dx}<0 <b^\varepsilon  \qquad {\rm and} \qquad  \frac{d^2b^\varepsilon}{dx^2} <0 <b^\varepsilon  \quad &{\rm in} \, &(-\xi,\ell), \\
&b^\varepsilon <0<\frac{d^2 b^\varepsilon}{dx^2} 
 \qquad {\rm and} \qquad  \frac{db^\varepsilon}{dx} <0<\frac{d^2 b^\varepsilon}{dx^2} \quad &{\rm in} \, &(\xi,\ell).
 \end{aligned}
\end{equation*}

\quad \textbf {A2}. For any $C_0>0$ there exists $c_0 >0$ such that, for any $x$ satisfying $|x-\xi| \geq c_0 \varepsilon$, there holds
\begin{equation*}
|b^\varepsilon-b^0| \leq C_0\varepsilon \quad {\rm and} \quad \varepsilon \left| \frac{db^\varepsilon}{dx} \right| \leq C_0.
\end{equation*}

\quad \textbf {A3}. The left (reps. the right) first order derivatives of $b^\varepsilon$ at $\xi$ exist and 
\begin{equation*}
 \liminf_{\varepsilon \to 0^+} \varepsilon \left| \frac{db^\varepsilon}{dx}(\xi \pm) \right| >0.
\end{equation*}
Then there exists a constant $C>0$ such that, for all $k \geq 2$, $\lambda_k^{vsc} \leq -C/\varepsilon$ for all $\varepsilon$ sufficiently small.

\end{proposition}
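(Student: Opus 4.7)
The plan is to reduce the non-self-adjoint spectral problem \eqref{ldto} to a Schrödinger eigenvalue problem and then use Sturm oscillation together with a Poincaré-type estimate on the resulting nodal domains. Setting $\varphi(x)=e^{g^\varepsilon(x)}w(x)$ with $g^\varepsilon(x):=\tfrac{1}{2\varepsilon a^2}\int_{0}^{x}b^\varepsilon(s)\,ds$, the equation $\mathcal L^{\varepsilon,\mathrm{vsc}}\varphi=\lambda\varphi$ becomes
\begin{equation*}
-\varepsilon a^2\,w''+V^\varepsilon(x)\,w=-\lambda\,w,
\qquad V^\varepsilon:=\frac{(b^\varepsilon)^2}{4\varepsilon a^2}+\frac{(b^\varepsilon)'}{2},
\end{equation*}
still on $(-\ell,\ell)$ with Dirichlet conditions on $w$ (the gauge $e^{g^\varepsilon}$ is positive). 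The operator $H^\varepsilon:=-\varepsilon a^2\partial_x^2+V^\varepsilon$ is self-adjoint on $L^2(-\ell,\ell)$, so $\mu_k^\varepsilon:=-\lambda_k^{\mathrm{vsc}}$ are real and the thesis becomes $\mu_k^\varepsilon\ge C/\varepsilon$ for $k\ge 2$.

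I would then analyze $V^\varepsilon$ using \textbf{A1}--\textbf{A3}. In the outer strip $|x-\xi|\ge c_0\varepsilon$, \textbf{A2} gives $(b^\varepsilon)^2\ge c>0$ and $\varepsilon|(b^\varepsilon)'|\le C_0$ with $C_0$ at our disposal, hence $V^\varepsilon\ge c/\varepsilon$ pointwise if $C_0$ is chosen small. In the inner strip $|x-\xi|\le c_0\varepsilon$, \textbf{A3} allows the term $(b^\varepsilon)'/2$ to push $V^\varepsilon$ down to $-\mathcal O(1/\varepsilon)$, producing a potential well of width $\mathcal O(\varepsilon)$ and depth $\mathcal O(1/\varepsilon)$ about $\xi$: exactly the analytic source of the exponentially small first eigenvalue $\mu_1^\varepsilon$ captured in Proposition \ref{citata1}.

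The bound for $k\ge 2$ then follows from Sturm oscillation. The eigenfunction $w_k$ has exactly $k-1$ zeros and therefore at least two nodal domains $\Omega$, on each of which $w_k$ is the first Dirichlet eigenfunction of $H^\varepsilon$; hence $\mu_k^\varepsilon=\mu_1(\Omega;H^\varepsilon)$. If some nodal domain is disjoint from the $c_0\varepsilon$-neighbourhood of $\xi$, then $V^\varepsilon\ge c/\varepsilon$ there and the Rayleigh quotient delivers the bound at once. If instead some zero $z$ of $w_k$ lies inside the $c_0\varepsilon$-well, then for each adjacent nodal domain $\Omega$ the well portion $\Omega^{\mathrm{w}}:=\Omega\cap(\xi-c_0\varepsilon,\xi+c_0\varepsilon)$ has width $\le 2c_0\varepsilon$ and every test $\psi\in H^1_0(\Omega)$ satisfies $\psi(z)=0$. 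Since $z$ is an endpoint of $\Omega^{\mathrm{w}}$, a standard Poincaré estimate yields $\|\psi\|^2_{L^2(\Omega^{\mathrm{w}})}\le 2c_0^2\varepsilon^2\,\|\psi'\|^2_{L^2(\Omega^{\mathrm{w}})}$. Splitting the Rayleigh quotient between $\Omega^{\mathrm{w}}$ and its complement and choosing $c_0$ small enough relative to $a^2$ lets the $\mathcal O(1/\varepsilon)$ negative contribution from the well be absorbed into the kinetic term $\varepsilon a^2\|\psi'\|^2$, producing the required estimate $\mu_1(\Omega;H^\varepsilon)\ge C/\varepsilon$.

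The main delicate point is the consistency of the choice of $c_0$: it must be small enough for the Poincaré absorption to succeed, yet large enough to be compatible with the $C_0$-dependent outer bound furnished by \textbf{A2}. Verifying that both requirements can be met uniformly in $\varepsilon$, and observing that the argument uses only the existence of at least one zero of $w_k$ and hence is automatically uniform in $k\ge 2$, completes the proof.
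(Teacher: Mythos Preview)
The paper does not give its own proof of this proposition; it is quoted from \cite{MasStr12}, so there is no in-paper argument to compare against. Your Liouville reduction to the Schr\"odinger operator $H^\varepsilon=-\varepsilon a^2\partial_x^2+V^\varepsilon$ with $V^\varepsilon=(b^\varepsilon)^2/(4\varepsilon a^2)+(b^\varepsilon)'/2$ is correct, and so is the Sturm--oscillation step reducing $\mu_k^\varepsilon$ to the first Dirichlet eigenvalue of $H^\varepsilon$ on a nodal interval.

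The gap is exactly the one you flag but do not close. In \textbf{A2} the quantifiers run ``for every $C_0>0$ there exists $c_0>0$'': making $C_0$ small enough that $V^\varepsilon\ge c/\varepsilon$ holds on $\{|x-\xi|\ge c_0\varepsilon\}$ may force $c_0$ to be large, while your Poincar\'e absorption requires $c_0$ to be bounded above by a fixed constant of order $a/\sqrt{M}$, where $M/\varepsilon$ is the well depth. Nothing in \textbf{A1}--\textbf{A3} ties these two requirements together, so ``verifying that both requirements can be met'' is not a formality but the whole difficulty. A subordinate issue is that \textbf{A1}--\textbf{A3} give no upper bound on $\varepsilon|(b^\varepsilon)'|$ near $\xi$ (that is the content of \textbf{A0} in Proposition~\ref{citata1}, which is not among the present hypotheses), so even your claim that the well has depth $\mathcal O(1/\varepsilon)$ is unjustified as stated. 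The very specific sign conditions on $(b^\varepsilon)''$ in \textbf{A1}, which your argument never uses, suggest that the proof in \cite{MasStr12} exploits the convexity/concavity of $b^\varepsilon$ on either side of $\xi$ through a more direct comparison mechanism rather than a Rayleigh-quotient estimate on nodal domains.
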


\begin{remark}\rm{
When $f(u)=u^2/2$, then $b^\varepsilon(x;\xi)=U^\varepsilon(x;\xi)$. With the choice of $U^\varepsilon$ proposed in Example \ref{ex1}, we can easily check that hypotheses  \textbf{A0-1-2-3} are verified.
}
\end{remark}
\vskip0.25cm

From \eqref{equivaut}, we observe that $\lambda$ is an eigenvalue of $\mathcal L^\varepsilon_\xi$ if and only if $\lambda^{vsc}:=\lambda(1+\varepsilon \lambda)$ is an eigenvalue for the operator $\mathcal L^{\varepsilon,vsc}$ defined in \eqref{ldto}.
Hence, if $\lambda=\lambda_n^{JX}$ is an eigenvalue of $\mathcal L^\varepsilon_\xi$, then there exists an eigenvalue $\lambda_n^{vsc}$ such that
\begin{equation*}
\varepsilon {\lambda_n^{JX}}^2+\lambda_n^{JX}=\lambda_n^{vsc},
\end{equation*}
so that
\begin{equation}\label{uguaauto}
\lambda_{n,\pm}^{JX}=-\frac{1}{2\varepsilon} \pm \frac{1}{2\varepsilon} \sqrt{1+4\varepsilon\lambda_n^{vsc}}.
\end{equation}
Hence, if $\lambda_n^{vsc} > -\frac{1}{4\varepsilon}$, then $\lambda_{n,\pm}^{JX} \in \R$. 
Moreover, since $\lambda_n^{vsc}$ are negative for all $n \in \N$
\begin{equation}\label{estlambda1}
\lambda_{n,+}^{JX}=\frac{2 \lambda_n^{vsc}}{1+\sqrt{1+4\varepsilon\lambda_n^{vsc}}} <0, \qquad \lambda_{n,-}^{JX}=\frac{-2\lambda_n^{vsc}}{\sqrt{1+4\varepsilon\lambda_n^{vsc}}-1}<0.
\end{equation}
Due to Propositions \ref{citata1} and \ref{citata2}, we know that $\lambda_1^{vsc}>-\frac{1}{4\varepsilon}$ and $\lambda_1^{vsc} \sim -e^{-C/\varepsilon}$ as $\varepsilon \to 0$. Thus, from \eqref{uguaauto} and \eqref{estlambda1}, there exists a constant $C'$ such that
\begin{equation*}
-e^{-C'/\varepsilon} \leq \lambda_{1,+}^{JX}<0, \qquad \lambda_{1,-}^{JX} \leq -\frac{1}{2\varepsilon}.
\end{equation*}
Moreover, if for some $n >1$ there exist other eigenvalues $\lambda_n^{vsc}$ such that $\lambda_n^{vsc} >-\frac{1}{4\varepsilon}$, then they are of order $1/\varepsilon$, so that
\begin{equation*}
\lambda_{n,\pm}^{JX} \leq -C''/\varepsilon.
\end{equation*}
On the other hand, if $\lambda_n^{vsc} < -\frac{1}{4\varepsilon}$, then $\lambda_{n,\pm}^{JX} \in \C$. More precisely
\begin{equation*}
\lambda_{n,\pm}^{JX}=-\frac{1}{2\varepsilon} \pm \frac{i}{2\varepsilon} \sqrt{|1+4\varepsilon\lambda_n^{vsc}|}.
\end{equation*}
Proposition \ref{citata2} assures that there exists $j \geq 2$ such that $\lambda_n^{vsc} < -\frac{1}{4\varepsilon}$ for all $n \geq j$, so that $Re(\lambda_{n,\pm}^{JX})$ and  $Im(\lambda_{n,\pm}^{JX})$ are terms of order $1/\varepsilon$.
For example, if $j=2$ and we take into account $\lambda_2^{vsc}<0$, the corresponding eigenvalues for $\mathcal L^\varepsilon_\xi$ verifies
\begin{equation*}
\begin{aligned}
Re(\lambda_{2,\pm}^{JX})=-\frac{1}{2\varepsilon}, \qquad
Im(\lambda_{2,\pm}^{JX})= \pm \frac{1}{2\varepsilon} \sqrt{|1+4\varepsilon\lambda_2^{vsc}|}.
\end{aligned}
\end{equation*}
Moreover, for $\lambda_{3,\pm}^{JX}$, since $|\lambda_3^{vsc}| >|\lambda_2^{vsc}| $, we have
\begin{equation*}
\begin{aligned}
Re(\lambda_{3,\pm}^{JX})=-\frac{1}{2\varepsilon}=Re(\lambda_{2,\pm}^{JX}), \qquad
|Im(\lambda_{3,\pm}^{JX})|=  \frac{1}{2\varepsilon} \sqrt{|1+4\varepsilon\lambda_3^{vsc}|} > |Im(\lambda_{2,\pm}^{JX})|.
\end{aligned}
\end{equation*}
Figure \ref{fig3} shows the connection between the two spectra when $j=2$, so that only the first two eigenvalues of $\mathcal L^\varepsilon_\xi$ belong to $\R$.

 \begin{figure}[ht]
\centering
\includegraphics[width=1\linewidth]{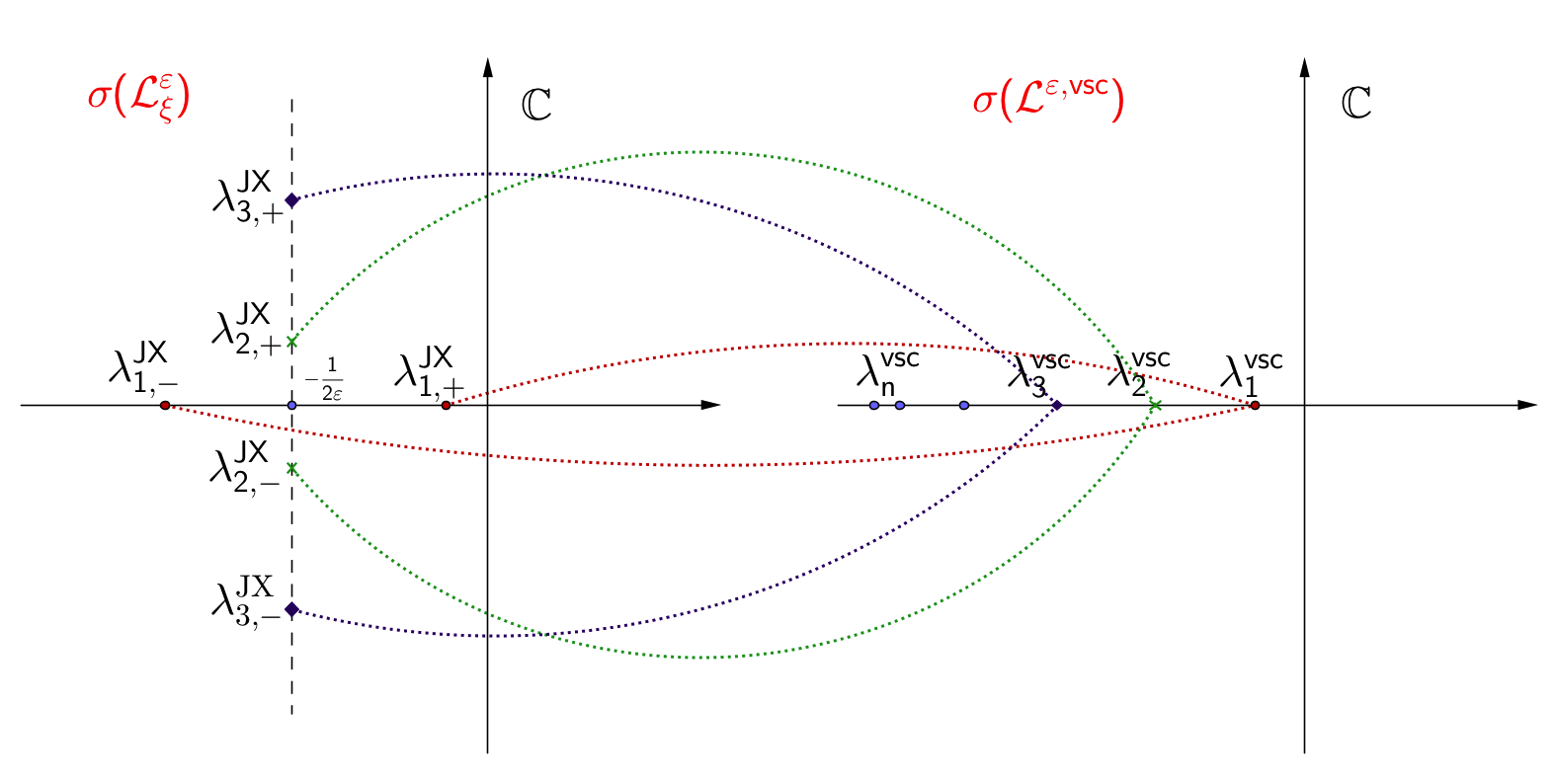}
\caption{\small{The spectra of the operators $ \mathcal L^\varepsilon_\xi$ and $\mathcal L^{\varepsilon,vsc}$.}}\label{fig3}
 \end{figure}
 
Hence, the following proposition holds
\begin{proposition}\label{spctrum}
Let $b^\varepsilon$ be a family of functions satisfying assumptions \textbf{A0-1-2-3} for some $\xi \in (-\ell, \ell)$ and for some $b_+ <0 <b_-$. Then the spectrum of the linearized operator $\mathcal L^\varepsilon_\xi$ can be decomposed as follow
\vskip0.3cm
 \textbf {i}. $\lambda_{1,+}^{JX} \in \R$ \ and  $ \quad -e^{-C'/\varepsilon}\leq \lambda^{JX}_{1,+} <0$.
 \vskip0.3cm
  \textbf {ii}. $\lambda_{1,-}^{JX} \in \R$ \ and $\quad   \lambda^{JX}_{1,-} \leq -1/\varepsilon$.
  \vskip0.3cm
\textbf{iii}. There exists $k \geq 0$ such that
$$\lambda_{n,\pm}^{JX} \in \R \quad {\rm and } \quad \lambda_{n,\pm}^{JX} \leq -C''/\varepsilon, \quad \forall \ n=2,...,1+k.$$
\vskip0.3cm
\textbf{iv}. $\lambda_{n,\pm}^{JX} \in \C$ for all $n \geq 2+k$ and
$$Re(\lambda_{n,\pm}^{JX})=-\frac{1}{2\varepsilon}, \quad Im(\lambda_{n,\pm}^{JX}) \sim \pm \frac{C}{\varepsilon.}$$
\end{proposition}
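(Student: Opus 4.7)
The plan is to reduce the eigenvalue problem for $\mathcal{L}^\varepsilon_\xi$ to the scalar second-order problem for $\mathcal{L}^{\varepsilon,vsc}$ already analyzed in Propositions \ref{citata1} and \ref{citata2}, and then translate the resulting spectral distribution through the quadratic relation $\varepsilon\lambda^2+\lambda=\lambda^{vsc}$ obtained in \eqref{equivaut}. Concretely, starting from $\mathcal{L}^\varepsilon_\xi\Phi=\lambda\Phi$ with $\Phi=(\varphi,\psi)$ and Dirichlet data for both components, I use the first equation to express $\partial_x\psi=-\lambda\varphi$, differentiate the second equation in $x$, and obtain $\mathcal{L}^{\varepsilon,vsc}\varphi=\lambda(1+\varepsilon\lambda)\varphi$. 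This identifies $\lambda\in\sigma(\mathcal{L}^\varepsilon_\xi)$ with $\lambda^{vsc}:=\lambda(1+\varepsilon\lambda)\in\sigma(\mathcal{L}^{\varepsilon,vsc})$, yielding the explicit root formula \eqref{uguaauto}.

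Next, the hypothesis \textbf{A0--A3} on $b^\varepsilon$ allows me to invoke Propositions \ref{citata1}--\ref{citata2}, giving $-Ce^{-c/\varepsilon}\le \lambda^{vsc}_1<0$ and $\lambda^{vsc}_n\le -C/\varepsilon$ for every $n\ge 2$. A case analysis based on the sign of the discriminant $1+4\varepsilon\lambda^{vsc}_n$ produces the four branches. For \textbf{i}, since $|4\varepsilon\lambda^{vsc}_1|\le 4\varepsilon C e^{-c/\varepsilon}\to 0$, the expansion $\sqrt{1+4\varepsilon\lambda^{vsc}_1}=1+2\varepsilon\lambda^{vsc}_1+\mathcal{O}(\varepsilon^2(\lambda^{vsc}_1)^2)$ gives $\lambda^{JX}_{1,+}=\lambda^{vsc}_1+\mathcal{O}(\varepsilon(\lambda^{vsc}_1)^2)$, hence $-e^{-C'/\varepsilon}\le \lambda^{JX}_{1,+}<0$ for a slightly enlarged $C'>0$. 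For \textbf{ii}, the minus-branch of the same relation yields $\lambda^{JX}_{1,-}=-1/\varepsilon-\lambda^{vsc}_1+\mathcal{O}(\varepsilon(\lambda^{vsc}_1)^2)\le -1/\varepsilon$ when $\varepsilon$ is small.

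For \textbf{iii} and \textbf{iv}, I define $k$ as the number of indices $n\ge 2$ for which $\lambda^{vsc}_n>-1/(4\varepsilon)$; since the eigenvalues of the elliptic operator $\mathcal{L}^{\varepsilon,vsc}$ form a discrete real sequence diverging to $-\infty$, the integer $k$ is finite. For $n\in\{2,\dots,1+k\}$ the discriminant is nonnegative, both branches $\lambda^{JX}_{n,\pm}$ are real, and the bounds in \eqref{estlambda1} combined with $\lambda^{vsc}_n\le -C/\varepsilon$ give $\lambda^{JX}_{n,\pm}\le -C''/\varepsilon$. For $n\ge 2+k$ the discriminant is strictly negative, so
\[
\lambda^{JX}_{n,\pm}=-\frac{1}{2\varepsilon}\pm\frac{i}{2\varepsilon}\sqrt{|1+4\varepsilon\lambda^{vsc}_n|},
\]
whence $Re(\lambda^{JX}_{n,\pm})=-1/(2\varepsilon)$ and $|Im(\lambda^{JX}_{n,\pm})|\sim 1/\varepsilon$, using that $\varepsilon|\lambda^{vsc}_n|$ is bounded below by a positive constant.

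The main obstacle is the bookkeeping required to justify that the differentiation step does not lose or introduce eigenvalues. I need to verify that the Dirichlet data on $\varphi$ together with the compatibility $\partial_x\psi|_{\partial I}=-\lambda\varphi|_{\partial I}=0$ match the boundary conditions under which Propositions \ref{citata1}--\ref{citata2} are stated, and that, conversely, for each admissible $\lambda\ne 0$ the recovery $\psi=-(1/\lambda)\partial_x\varphi$ reconstructs a genuine eigenfunction of $\mathcal{L}^\varepsilon_\xi$; the possible value $\lambda=0$ is ruled out separately since $\lambda^{vsc}_1<0$. Beyond this, the argument is essentially algebraic, following the quadratic-root computation already displayed before the statement.
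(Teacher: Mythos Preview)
Your proposal is correct and follows essentially the same route as the paper: reduce the system eigenvalue problem to the scalar one via \eqref{equivaut}, invoke Propositions~\ref{citata1}--\ref{citata2}, and read off the four cases from the quadratic root formula \eqref{uguaauto} according to the sign of $1+4\varepsilon\lambda^{vsc}_n$. If anything, you are more careful than the paper in flagging the boundary-condition compatibility and the reconstruction $\psi=-\lambda^{-1}\partial_x\varphi$, which the paper simply takes for granted; one small quibble is that in part \textbf{ii} your expansion gives $\lambda^{JX}_{1,-}=-1/\varepsilon-\lambda^{vsc}_1+\mathcal{O}(\cdot)$ with $-\lambda^{vsc}_1>0$, so the strict inequality $\le -1/\varepsilon$ should really be read as $\le -1/(2\varepsilon)$ (as in the paper's own derivation) or as an asymptotic $\sim -1/\varepsilon$.
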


\begin{remark}\rm{ The case $k=0$ correspond to the case where the eigenvalues  $\lambda_{n,\pm}^{JX}$ are complex for all $n \geq 2$ (see also Figure \ref{fig3}). Indeed, in this case, Proposition \ref{spctrum},  step \textbf{iii}, assures that $\lambda_{n,\pm}^{JX} \in \R$ for all $n=2,...,1$, meaning that there no exists $n \in \N$, $n >1$ such that $\lambda_{n,\pm}^{JX} \in \R$. Furthermore, step {\bf iv}  states that $\lambda_{n,\pm}^{JX} \in \C$ for all $n \geq 2$.

}

\end{remark}
\begin{remark}\rm{ In \cite{KreissKreiss86}, Kreiss G. and Kreiss H. performed the spectral analysis for the operator
\begin{equation*}
\mathcal L_\varepsilon u := \varepsilon \partial_x^2u - \partial_x (f'(\bar U^\varepsilon(x))u),
\end{equation*}
arising from the linearization around the exact steady state $\bar U^\varepsilon(x)$ of
\begin{equation*}
 \partial_t u=\varepsilon\partial_x^2u-\partial_x f(u)
\end{equation*}
proving that all the eigenvalues are real and negative. By using this result and our spectral analysis, if we linearize the system \eqref{JX2} around the exact stationary solution $(\bar U^\varepsilon, \bar V^\varepsilon)$,  we can prove that the real part of all the eigenvalues of the linearized operator is negative, so that the steady state $(\bar U^\varepsilon, \bar V^\varepsilon)$ is asymptotically stable with exponential rate.

}
\end{remark}

\section{Asymptotic estimates for the first eigenvalue}\label{4}
 
In this section we want to study the behavior in $\varepsilon$ of the principal eigenvalue of the operator $\mathcal L_\xi ^\varepsilon$ associated to the linearization of \eqref{JX2} around an approximate stationary solution. Since usually the metastable behavior is the result of the presence of a first small eigenvalue, our aim is to determine an asymptotic expression for  $\lambda_{1,+}^{JX}$. We have already emphasized the fact that $\lambda^{JX}$ is an eigenvalue of the nonlinear Jin-Xin system if and only if $\lambda^{vsc}=\lambda^{JX}(1+\varepsilon \lambda^{JX})$ is an eigenvalue for the  operator $\mathcal L^{\varepsilon,vsc}$ defined in \eqref{ldto} where $a \equiv 1$ and
where $U^\varepsilon(x;\xi(t))$ is an approximate stationary solution for the scalar conservation law
\begin{equation}\label{burger}
\left\{\begin{aligned}
&\partial_t u= \varepsilon \partial_x^2 u-\partial_x f(u), \\
&u(\pm \ell, t)=\mp u^*, \quad u(x,0)=u_0(x).
\end{aligned}\right.
\end{equation}
In particular
\begin{equation}\label{lambda1JX}
|\lambda_{1,+}^{JX}|= \frac{2|\lambda_1^{vsc}|}{1+\sqrt{1+4\varepsilon\lambda_1^{vsc}}}.
\end{equation}
In \cite{MasStr12}, in the special case $f(u)=u^2/2$, $U^\varepsilon(x;\xi(t))$ is given by \eqref{Ueps}. In that paper it is proven that, for $\varepsilon \sim 0$
\begin{equation*}
\lambda_1^{vsc}(\xi) \sim -\frac{{u^*}^2}{2\varepsilon}\left[  e^{-u^*\varepsilon^{-1}(\ell-\xi)}+e^{-u^* \varepsilon^{-1}(\ell+\xi)}\right] ,
\end{equation*}
so that
\begin{equation}\label{lambdaasybur}
|\lambda_{1,+}^{JX}(\xi)| \sim \frac{ \frac{{u^*}^2}{\varepsilon}\left[  e^{-u^*\varepsilon^{-1}(\ell-\xi)}+e^{-u^* \varepsilon^{-1}(\ell+\xi)}\right]}{1+ \sqrt{1-2  {u^*}^2 \left[  e^{-u^*\varepsilon^{-1}(\ell-\xi)}+e^{-u^*\varepsilon^{-1}(\ell+\xi)}\right]}}.
\end{equation}
This formula shows that the principal eigenvalue of the Jin-Xin system with $f(u)=u^2/2$ is exponentially small in $\varepsilon$.

\vskip0.25cm
In order to determine an asymptotic expression of the first eigenvalue of the operator \eqref{opasy} for a general class of flux function $f(u)$, we refer to the paper of Reyna L.G. and Ward M.J., \cite{ReynWard95}; here the authors use the method of matched asymptotic expansions (MMAE) to determine an approximate stationary solution to \eqref{burger}. 

Mimicking their approach and performing the same calculations as in \cite{ReynWard95}, with the appropriate changes due to the fact that the study of our equation is made in the interval $(-\ell,\ell)$ instead of $(0,1)$, we obtain that the leading order MMAE solution for $\varepsilon \to 0^+$ is given by a function $u_s(x;\xi) \sim u_s[\varepsilon^{-1}(x-\xi)]$, where $\xi \in (-\ell,\ell)$ and the shock profile $u_s(z)$ satisfies 
\begin{equation*}
\left\{\begin{aligned}
&u'_s(z)=f(u_s(z))-f(u^*), \quad -\infty < z < \infty,\\
&u_s(z) \sim u^*-z_- e^{\nu_- z}, \quad z \to -\infty, \\ 
&u_s(z) \sim -u^*+z_+ e^{-\nu_+ z}, \quad z \to +\infty.
\end{aligned}\right.
\end{equation*}
The positive constant $\nu_{\pm}$ and $z_{\pm}$ describe the tail behavior of $u_s(z)$ and are defined by
\begin{equation*}
\begin{aligned}
\nu_{\pm} &= \mp f'(\mp u^*), \\
\log \left( \frac{z_{\pm}}{u^*}\right) &= \pm \nu_{\pm} \int_0^{\mp u^*} \left[ \frac{1}{f(\eta)-f(u^*)} \pm \frac{1}{\nu_{\pm}(\eta \pm u^*)}\right] d\eta.
\end{aligned}
\end{equation*}
In particular, when $f(u)=u^2/2$, $u_s(z)=-u^* \tanh(u^* z/2)$, according to \eqref{Ustatbur}. Notice that the MMAE solution satisfies exactly the equation, while the boundary conditions are satisfy within exponentially small terms. Instead, the construction presented  in this paper in Example \ref{ex1} gives a function $U^\varepsilon(x;\xi)$ that verifies exactly the boundary conditions and solves approximately the stationary equation. 

The eigenvalue problem associated to the linearization around $u_s$ is given by
 \begin{equation}\label{eigenvsc}
\left\{\begin{aligned}
& L \phi \equiv \varepsilon^2 \partial_x^2 \phi - V[\varepsilon^{-1}(x -\xi)] \phi = \lambda \phi, \\
&\phi(\pm \ell)=0, \\ 
&V(z) = \frac{1}{4}[f'(u_s(z))]^2+\frac{1}{2} f''(u_s(z))u'_s(z).
\end{aligned}\right.
\end{equation}
In \cite{ReynWard95} it is proven that the first eigenvalue of \eqref{eigenvsc} has the following asymptotic representation (for details see \cite[Formula (2.14)]{ReynWard95})
\begin{equation*}
\lambda_1^{vsc}(\xi) \sim -\frac{1}{2u^*} \left[ a_+ \nu_+^2 e^{-\nu_+\varepsilon^{-1}(\ell-\xi)}+a_-\nu_-^2e^{-\nu_- \varepsilon^{-1}(\ell+\xi)}\right].
\end{equation*}
Finally, from \eqref{lambda1JX}, we get
\begin{equation}\label{lambdaasy}
|\lambda_{1,+}^{JX}(\xi)| \sim \frac{ \frac{1}{u^*}\left[ a_+ \nu_+^2 e^{-\nu_+\varepsilon^{-1}(\ell-\xi)}+a_-\nu_-^2e^{-\nu_- \varepsilon^{-1}(\ell+\xi)}\right]}{1+ \sqrt{1-\frac{2 \varepsilon}{u^*} \left[ a_+ \nu_+^2 e^{-\nu_+\varepsilon^{-1}(\ell-\xi)}+a_-\nu_-^2e^{-\nu_- \varepsilon^{-1}(\ell+\xi)}\right]}}.
\end{equation}
This formula shows that $\lambda_{1,+}^{JX}$ is exponentially small as $\varepsilon \to 0$. We remark that, when $f(u)=u^2/2$,  $a_+=a_-=2u^*$ and $\nu_+=\nu_-=u^*$, so that \eqref{lambdaasy} is the same as \eqref{lambdaasybur}.

\section{The behavior of the shock layer position}\label{5}

Let us consider the system \eqref{xiYcompleto} for the couple $(\xi,Y)$ and let us neglect the $o(Y)$ terms
\begin{equation}\label{xiYlineare1}
\left\{\begin{aligned}
\frac{d\xi}{dt}&=\theta^\varepsilon(\xi)  \left( 1 + \frac{ \langle \partial_\xi \boldsymbol{\psi}^\varepsilon_1,Y\rangle }{\alpha_0^\varepsilon(\xi)}\right), \\
Y_t&=(\mathcal L_\xi^\varepsilon +\mathcal M_\xi^\varepsilon)Y +H^\varepsilon(x;\xi).
\end{aligned}\right.
\end{equation}
This system is obtained by linearizing with respect to $Y$ and by keeping the nonlinear dependence on $\xi$, in order to describe the slow motion of the shock layer position far from the equilibrium location $\bar \xi$.

We complement the so called \textbf{quasi-linearized system} \eqref{xiYlineare1} with initial data\begin{equation*}
\xi(0)=\xi_0 \in (-\ell,\ell) \quad {\rm and} \quad Y(x,0)=(u_0(x),v_0(x)), \quad u_0,v_0 \in L^2(-\ell,\ell).
\end{equation*}
The aim of this section is to analyze the behavior  of the solution to \eqref{xiYlineare1} in the limit of small $\varepsilon$. Subsequently, we will prove a result that characterizes the behavior of the shock layer location, proving that it moves towards the unique stationary solution with exponentially small rate.

Before stating our  result, let us recall the assumptions. 

{\bf H1.} Let the family $\{\textbf{W}^\varepsilon(\cdot;\xi) \}$ be such that there exist two families of smooth functions $\Omega_1^\varepsilon$  and $\Omega_2^\varepsilon$ such that 
\begin{equation*}
  \begin{aligned}
&|\langle \psi(\cdot), \mathcal P_1^\varepsilon[\textbf{W}^\varepsilon(\cdot,\xi)] \rangle | \leq |\Omega_1^\varepsilon(\xi)| |\psi|_{_{L^\infty}} \quad \forall \psi \in C(I), \\
&|\langle \psi(\cdot), \mathcal P_2^\varepsilon[\textbf{W}^\varepsilon(\cdot,\xi)] \rangle | \leq |\Omega_2^\varepsilon(\xi)| |\psi|_{_{L^\infty}} \quad \forall \psi \in C(I).
\end{aligned}
       \end{equation*}  
We also assume that $\textbf{W}^\varepsilon$ is asymptotically a solution, i.e. we require  that 
\begin{equation*}
\lim_{\varepsilon \to 0} |\Omega_1^\varepsilon|_{_{L^\infty}} =0, \quad \lim_{\varepsilon \to 0} |\Omega_2^\varepsilon|_{_{L^\infty}} =0,
\end{equation*}
uniformly with respect to $\xi$.

Example \ref{ex2} show that hypothesis {\bf H1} is verified in the case of the quadratic flux $f(u)=u^2/2$.
\vskip0.2cm

{\bf H2.} There exists a constant $c_0>0$ such that
\begin{equation*}
|\Omega_1^\varepsilon(\xi)|+|\Omega^\varepsilon_2(\xi)| \leq c_0 |\lambda_{1,+}^{JX}(\xi)|, \quad \forall \ \xi  \in (-\ell,\ell).
\end{equation*} 
By comparing the asymptotic expression for $\lambda^{JX}_{1,+}$ given in \eqref{lambdaasybur} with the one for $\Omega^\varepsilon_1$ and $\Omega^\varepsilon_2$ obtained in Example \ref{ex3}, we can easily check that hypothesis {\bf H2} is verified for the Jin-Xin system when $f(u)=u^2/2$.
\vskip0.2cm
{\bf H3.} For what concern the eigenvalues of the linear operator $\mathcal L^\varepsilon_\xi$, we have proven that there exist two positive constants $c_1, c_2$ independent on $\xi$ such that
\begin{equation*}
\lambda_{1,+}^{JX}(\xi)-Re[\lambda_{2,\pm}^{JX}(\xi)]>c_1, \quad  -e^{-c_2/\varepsilon}<\lambda_{1,+}^{JX}(\xi) <0 \quad \forall \, \xi \in (-\ell,\ell).
\end{equation*}
\vskip0.2cm

{\bf H4.} Concerning the solution $Z=(z,w)^T$ to the linear problem $\partial_t Z=\mathcal L^\varepsilon_\xi Z$, we require that there exists $\nu^\varepsilon>0$ such that for all $\xi \in (-\ell,\ell)$, there exist constants $C_\xi$ and $\bar C$ such that
\begin{equation}\label{0H4}
|(z,w)(t)|_{{}_{L^2}} \leq C_\xi |(z_0,w_0)|_{{}_{L^2}}e^{-\nu^\varepsilon t}, \quad C_\xi \leq \bar C \ \ \ \forall \xi \in (-\ell,\ell).
\end{equation}
\begin{remark}\rm{
The assumption that $C_\xi< \bar C$ for all $\xi$ means that the estimate \eqref{0H4} holds uniformly in $\xi$. Since $\xi$ belongs to a bounded interval of the real line, if we suppose that $\xi \mapsto C_{\xi(t)}$ is a continuous function, then there exists a maximum $\bar C$ in $[-\ell,\ell]$. For example, in the case of the Jin-Xin system with $f(u)=u^2/2$, the constant $\nu^\varepsilon$ behaves like $|\lambda_{1,+}^{JX}(\xi)| \sim e^{-1/\varepsilon}$, and the estimate \eqref{0H4} is independent of $\xi$.
}
\end{remark}
\subsection{Estimate on the perturbation $Y$}
Our first aim is to obtain an estimate  the perturbation $Y$. We recall that
\begin{equation}\label{eqpert}
\partial_tY=(\mathcal L_\xi^\varepsilon +\mathcal M_\xi^\varepsilon)Y +H^\varepsilon(x;\xi),
\end{equation}
where
\begin{equation*}
\begin{aligned}
\mathcal M_\xi^\varepsilon Y &=-\frac{1}{\alpha_0^\varepsilon(\xi)}\left ( \begin{aligned}
\partial_\xi U^\varepsilon(\cdot;\xi) \,\theta^\varepsilon(&\xi)\,\langle \partial_\xi \boldsymbol{\psi}^\varepsilon_1(\cdot;\xi), Y \rangle\\
\partial_\xi V^\varepsilon(\cdot;\xi) \,\theta^\varepsilon(&\xi)\, \langle \partial_\xi \boldsymbol{\psi}^\varepsilon_1(\cdot;\xi), Y \rangle \end{aligned}\right), \\
H^\varepsilon(x;\xi)&=\left( \begin{aligned}
\mathcal P_1^\varepsilon[ \textbf{W}^\varepsilon(\cdot;\xi)]&-\partial_\xi U^\varepsilon(\cdot;\xi) \theta^\varepsilon(\xi) \\
 \mathcal P_2^\varepsilon[\textbf{W}^\varepsilon(\cdot,\xi)]&-\partial_\xi V^\varepsilon(\cdot;\xi) \theta^\varepsilon(\xi) \end{aligned} \right).
\end{aligned} 
\end{equation*}
In particular, $\mathcal M^\varepsilon_\xi$ is a bounded operator, such that
\begin{equation}\label{asyMH}
\begin{aligned}
\|\mathcal M^\varepsilon_\xi \|_{\mathcal L(L^2;\R^2)} \leq C|\theta^\varepsilon(\xi)| \leq C( |\Omega_1^\varepsilon|_{{}_{L^\infty}}+|\Omega_2^\varepsilon|_{{}_{L^\infty}}), \quad \forall \xi \in (-\ell,\ell). 
\end{aligned}
\end{equation}
Indeed, if we ask the family $\{ \textbf{W}^\varepsilon\}$ to be never transversal to the first eigenfunction of the corresponding linearized operator, we can assume
\begin{equation*}
|\alpha_0^\varepsilon(\xi)| = \langle \boldsymbol{\psi}^\varepsilon_1(\cdot;\xi),\partial_\xi \textbf{W}^\varepsilon(\cdot,\xi) \rangle \geq c_0 >0,
\end{equation*}
for some $c_0$ independent on $\xi$. This gives us a (weak) restriction on the choice of the family $\{ \textbf{W}^\varepsilon\}$.

Concerning the term $H^\varepsilon$, we have
\begin{equation}\label{asyMH2}
|H^\varepsilon|_{{}_{L^\infty}} \leq C_1| \Omega_1^\varepsilon|_{{}_{L^\infty}}+C_2|\Omega_2^\varepsilon|_{{}_{L^\infty}},
\end{equation}
for some positive constants $C_1$ and $C_2$ independent on $\xi$ and $\varepsilon$. 

For the special case of $f(u)=u^2/2$, both $\mathcal M^\varepsilon_\xi$ and $H^\varepsilon$ are bounded by terms that are exponentially small in $\varepsilon$, while, for a general class of flux functions $f(u)$ that verify \eqref{ipof}, the hypotheses we required assure that all the terms in the equations for the perturbation $Y$ are small in $\varepsilon$.

\begin{theorem}\label{mainthe}
Let hypotheses {\bf H1-4} be satisfied. Then, for $\varepsilon$ sufficiently small,  the solution Y to \eqref{eqpert} satisfies the estimate
\begin{equation*}
|Y|_{{}_{L^2}}(t) \leq  \left[C_1|\Omega_1^\varepsilon|_{{}_{L^\infty}}+C_2|\Omega^\varepsilon_2|_{{}_{L^\infty}}\right] \, t+e^{-\mu^\varepsilon t}|Y_0|_{L^2}, 
\end{equation*}
for some positive constants $C_1$, $C_2$ and
\begin{equation*}
\mu^\varepsilon :=\sup_{\xi}\lambda^{JX}_{1,+}(\xi)- C(|\Omega_1^\varepsilon|_{{}_{L^\infty}}+|\Omega_2^\varepsilon|_{{}_{L^\infty}})>0, \quad \mu^\varepsilon \to 0 \, \,{\rm as} \, \, \varepsilon \to 0.
\end{equation*}
\end{theorem}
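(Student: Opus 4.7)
My plan is to combine a Duhamel representation for the inhomogeneous equation \eqref{eqpert} with a Gronwall argument. I introduce the two-parameter evolution operator $U^\varepsilon(t,s)$ on $L^2(I)$ generated by the time-dependent family $\mathcal L^\varepsilon_{\xi(t)}$, and regard $\mathcal M^\varepsilon_{\xi(t)}Y+H^\varepsilon(\cdot;\xi(t))$ as a forcing. Duhamel's formula then reads
\begin{equation*}
Y(t)= U^\varepsilon(t,0) Y_0 + \int_0^t U^\varepsilon(t,s)\bigl[\mathcal M^\varepsilon_{\xi(s)} Y(s)+H^\varepsilon(\cdot;\xi(s))\bigr]\,ds,
\end{equation*}
from which the claim will follow by bounding each term in $L^2$.

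The key preliminary is to upgrade the frozen-$\xi$ semigroup bound in hypothesis \textbf{H4} to a uniform propagator bound $\|U^\varepsilon(t,s)\|_{L^2\to L^2}\leq \tilde C\, e^{-\nu^\varepsilon(t-s)}$. Since the speed $|\dot\xi(t)|$ is controlled by $|\theta^\varepsilon(\xi)|$, which according to \eqref{def} and Example \ref{ex3} is itself of order $|\Omega_1^\varepsilon|_{{}_{L^\infty}}+|\Omega_2^\varepsilon|_{{}_{L^\infty}}$, a freezing / time-slicing argument — in which $\xi$ is held constant on short intervals and the resulting commutator errors controlled — yields this bound without deterioration of the rate. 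Once it is available, taking $L^2$ norms in Duhamel's identity and using the operator estimate \eqref{asyMH} and the forcing estimate \eqref{asyMH2}, and writing $\omega^\varepsilon:=C(|\Omega_1^\varepsilon|_{{}_{L^\infty}}+|\Omega_2^\varepsilon|_{{}_{L^\infty}})$, produces the scalar integral inequality
\begin{equation*}
|Y(t)|_{{}_{L^2}}\leq \tilde C e^{-\nu^\varepsilon t}|Y_0|_{{}_{L^2}}+\tilde C\omega^\varepsilon\int_0^t e^{-\nu^\varepsilon(t-s)}|Y(s)|_{{}_{L^2}}\,ds+\tilde C\bigl(C_1|\Omega_1^\varepsilon|_{{}_{L^\infty}}+C_2|\Omega_2^\varepsilon|_{{}_{L^\infty}}\bigr)\int_0^t e^{-\nu^\varepsilon(t-s)}\,ds.
\end{equation*}

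The final step is a standard Gronwall argument applied to $z(t)=e^{\nu^\varepsilon t}|Y(t)|_{{}_{L^2}}$: this produces exponential decay at the reduced rate $\mu^\varepsilon:=\nu^\varepsilon-\tilde C\omega^\varepsilon$, whose positivity for $\varepsilon$ small is guaranteed by hypothesis \textbf{H2} (which forces $\omega^\varepsilon\ll\nu^\varepsilon$). Using the elementary bound $\int_0^t e^{-\mu^\varepsilon(t-s)}\,ds\leq t$ for the forcing integral — cruder than $(1-e^{-\mu^\varepsilon t})/\mu^\varepsilon$, but uniformly good as $\mu^\varepsilon\to 0$ — yields the stated estimate, with constants $C_1,C_2$ independent of $\varepsilon$. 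The principal obstacle is the freezing step of the second paragraph: one must check that the $t$-dependence of $\xi$ through the ODE in \eqref{xiYlineare1} does not introduce errors spoiling the small rate $\nu^\varepsilon$, and that the constant $\tilde C$ inherited from \textbf{H4} can be chosen uniformly along the trajectory. This is exactly where the uniformity-in-$\xi$ built into hypotheses \textbf{H1}, \textbf{H3}, \textbf{H4}, combined with the smallness of $\dot\xi$ coming from \textbf{H2}, becomes essential.
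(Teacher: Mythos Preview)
Your proposal is correct and close in spirit to the paper's argument, but the organization differs in two places. First, where you treat $\mathcal M^\varepsilon_{\xi(t)}Y$ as part of the forcing and recover the reduced rate $\mu^\varepsilon=\nu^\varepsilon-\tilde C\omega^\varepsilon$ via Gronwall, the paper instead absorbs $\mathcal M^\varepsilon_\xi$ into the generator from the outset by invoking Pazy's bounded-perturbation theorem for stable families (Theorem~\ref{thpaz1} in the Appendix): this yields directly an evolution system $\mathcal T_\xi(t,s)$ for $\partial_t Y=(\mathcal L^\varepsilon_\xi+\mathcal M^\varepsilon_\xi)Y$ with bound $\|\mathcal T_\xi(t,s)\|\le \bar C\,e^{-\mu^\varepsilon(t-s)}$, so that Duhamel is applied only with the genuine source $H^\varepsilon$ and no Gronwall step is needed. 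Second, and more substantively, your passage from the frozen-$\xi$ semigroup bound in {\bf H4} to a two-parameter propagator bound is justified by a freezing/time-slicing argument that leans on the smallness of $|\dot\xi|$; the paper bypasses this entirely by appealing to the abstract stable-families machinery (Definition~\ref{def1}, Remark~\ref{rem1}, Theorem~\ref{thpaz3}), for which the relevant structural input is not that $\xi$ moves slowly but that the domain $D(\mathcal L^\varepsilon_\xi+\mathcal M^\varepsilon_\xi)$ is independent of $t$ (since $\xi(t)$ enters only through the lower-order coefficient $f'(U^\varepsilon)$). Both routes lead to the same estimate; the paper's is more self-contained once the Pazy theorems are granted, while yours is more hands-on but leaves the freezing step as the acknowledged ``principal obstacle''.
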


\begin{proof}
Since the operator $\mathcal L^\varepsilon_\xi+\mathcal M^\varepsilon_\xi$ is a linear operator that depends on time, to obtain rigorous estimates on the solution $Y$, we need to use the theory of {\it stable families of generators}, that is a generalization of the theory of semigroups for evolution systems of the form $\partial_t u= L u$. We will use some results of \cite{Pazy83}, which have been summarized in the Appendix A. More precisely, we want to show that $\mathcal L^\varepsilon_\xi+\mathcal M^\varepsilon_\xi$ is the infinitesimal generator of a $C_0$ semigroup $\mathcal T_\xi(t,s)$.

To this aim, concerning the eigenvalues of the linear operator $\mathcal L^\varepsilon_\xi$, we know that $\lambda_{1,+}^{JX}(\xi)$ is negative and behaves like $-e^{-1/\varepsilon}$ for all $\xi \in (-\ell, \ell)$, so that $\Lambda_1^\varepsilon:=\sup_\xi \lambda_{1,+}^{JX}(\xi)$
 is such that $-e^{-1/\varepsilon} \leq \Lambda_1^\varepsilon <0$, and this estimate is independent on $t$. Hence, by using Definition \ref{def1} and Remark \ref{rem1} (see Appendix A), we know that, for $t \in [0,T]$, $\mathcal L^\varepsilon_{\xi(t)}$ is the infinitesimal generator of a $C_0$ semigroup $\mathcal S_{\xi(t)}(s)$, $s>0$. Furthermore, since \eqref{0H4} holds, we get
\begin{equation*}
\| \mathcal S_{\xi(t)}(s)\| \leq \bar C e^{-|\Lambda_1^\varepsilon|s},
\end{equation*}
so that the family $\{ \mathcal L^\varepsilon_{\xi(t)}\}_{\xi(t) \in (-\ell,\ell)}$ is stable with stability constants $M=\bar C$ and $\omega=-|\Lambda_1^\varepsilon|$. Furthermore, since
\begin{equation*}
\|\mathcal M^\varepsilon_\xi \|_{\mathcal L(L^2;\R^2)}  \leq C( |\Omega_1^\varepsilon|_{{}_{L^\infty}}+|\Omega_2^\varepsilon|_{{}_{L^\infty}}), \quad \forall \xi \in (-\ell,\ell) ,
\end{equation*}
Theorem \ref{thpaz1} (see Appendix A) states that the family $\{ \mathcal L^\varepsilon_{\xi(t)}+ \mathcal M^\varepsilon_{\xi(t)}\}_{\xi(t) \in (-\ell,\ell)}$ is stable with $M=\bar C$ and $\omega= -|\Lambda_1^\varepsilon|+C(|\Omega_1^\varepsilon|_{{}_{L^\infty}}+|\Omega_2^\varepsilon|_{{}_{L^\infty}}) <0$. 

In order to apply Theorem \ref{thpaz3} (see Appendix A), we need to check that the domain of $\mathcal L^\varepsilon_\xi+\mathcal M^\varepsilon_\xi$ does not depend on time, and this is true since $\mathcal L^\varepsilon_\xi+\mathcal M^\varepsilon_\xi$ depends on time through the function $U^\varepsilon(x;\xi(t))$, that does not appear in the higher order terms of the operator. More precisely, the principal part of the operator does not depend on $\xi(t)$. Hence, we can define $\mathcal T_\xi(t,s)$ as the {\it evolution system} of $\partial_t Y=(\mathcal L^\varepsilon_\xi+ \mathcal M^\varepsilon_\xi)Y$, so that
\begin{equation}\label{Yfamiglieevol}
Y(t)=\mathcal T_\xi(t,s)Y_0+\int_s^t \mathcal T_\xi(t,r)H^\varepsilon(x;\xi(r)) dr, \quad 0 \leq s \leq t.
\end{equation} 
Moreover, there holds
\begin{equation*}
\|\mathcal T_\xi(t,s)\| \leq \bar Ce^{-\mu^\varepsilon (t-s)}, \qquad \mu^\varepsilon := |\Lambda^\varepsilon_1|- C(|\Omega_1^\varepsilon|_{{}_{L^\infty}}+|\Omega_2^\varepsilon|_{{}_{L^\infty}})>0.
\end{equation*}
Finally, from the representation formula \eqref{Yfamiglieevol} with $s=0$, it follows 
\begin{equation}\label{finalestY}
|Y|_{{}_{L^2}}(t) \leq e^{-\mu^\varepsilon t}|Y_0|_{{}_{L^2}}+ \sup_{\xi \in I}|H^\varepsilon|_{{}_{L^\infty}}(\xi) \int_0^t e^{-\mu^\varepsilon(t-r)} \ dr,  
\end{equation}
so that, by using \eqref{asyMH2}, we end up with
\begin{equation}\label{stimafinaleY}
|Y|_{{}_{L^2}}(t) \leq  \left[C_1|\Omega_1^\varepsilon|_{{}_{L^\infty}}+C_2|\Omega^\varepsilon_2|_{{}_{L^\infty}}\right]\, t+e^{-\mu^\varepsilon t}|Y_0|_{L^2}.
\end{equation}

\end{proof}

\begin{remark}\rm{

In the special case of Burgers flux, $\mu^\varepsilon$ is going to zero exponentially as $\varepsilon \to 0$, since $\lambda_1^\varepsilon$ behaves like $e^{-1/\varepsilon}$ and from the explicit  formula of $\Omega_1^\varepsilon$ and $\Omega^\varepsilon_2$ in Example \ref{ex2}. In the general case, assumptions {\bf H1-2} assure that $\mu^\varepsilon \to 0$ as $\varepsilon\to 0$.
}
\end{remark}

\subsection{Slow motion of the shock layer}
An immediate consequence of the estimate \eqref{stimafinaleY} is that, for $|Y|_{{}_{L^2}} <M$ for some $M>0$, the function $\xi(t)$ satisfies
\begin{equation*}
\frac{d\xi}{dt}=\theta^\varepsilon(\xi)(1+r) \quad {\rm with} \quad |r| \leq \left[ C_1|\Omega_1^\varepsilon|_{{}_{L^\infty}}+C_2|\Omega^\varepsilon_2|_{{}_{L^\infty}} \right] t +e^{-\mu^\varepsilon t}|Y_0|_{{}_{L^2}}.
\end{equation*}
More precisely, we can prove the following Proposition.

\begin{proposition}\label{SLBfin}
Let hypotheses {\bf H1-4} be satisfied. Assume also 
     \begin{equation}\label{altripth2}
(\xi-\bar \xi) \, \theta^\varepsilon(\xi)<0 \quad \textit{for any \, } \xi \in I, \, \xi \neq 0 \qquad \textit{and} \qquad { \theta^\varepsilon}'(\bar \xi) <0. 
       \end{equation}
Then, for  $\varepsilon$ and $|Y_0|_{{}_{L^2}}$ sufficiently small, the solution $\xi$ converges to $\bar \xi$ as $t \to +\infty$.
\end{proposition}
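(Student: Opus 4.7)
The plan is to regard \eqref{xiYlineare1} as a perturbation of the scalar ODE $\dot\xi=\theta^\varepsilon(\xi)$, for which \eqref{altripth2} makes $\bar\xi$ a globally attracting equilibrium on $I$, and to show that the coupling to $Y$ produces only a small multiplicative correction. More precisely, I would set
$$r(t):=\frac{\langle \partial_\xi \boldsymbol{\psi}^\varepsilon_1(\cdot;\xi(t)),Y(\cdot,t)\rangle}{\alpha_0^\varepsilon(\xi(t))},$$
so that the first equation in \eqref{xiYlineare1} reads $\dot\xi=\theta^\varepsilon(\xi)(1+r)$, and the whole strategy reduces to keeping $|r(t)|$ bounded below $1$ long enough to invoke \eqref{altripth2}.

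First, I would bound $r$ via Cauchy--Schwarz,
$$|r(t)|\leq \frac{\|\partial_\xi\boldsymbol{\psi}^\varepsilon_1(\cdot;\xi)\|_{L^2}}{|\alpha_0^\varepsilon(\xi)|}\,|Y(\cdot,t)|_{L^2},$$
using the lower bound $|\alpha_0^\varepsilon(\xi)|\geq c_0>0$ assumed before Theorem \ref{mainthe}, and then insert \eqref{stimafinaleY} from Theorem \ref{mainthe}. By \textbf{H1--H2}, the terms $|\Omega_i^\varepsilon|_{L^\infty}$ are of the same exponentially small order $e^{-c/\varepsilon}$ as $|\theta^\varepsilon|$, so choosing $|Y_0|_{L^2}$ small (independently of $\varepsilon$) one obtains $|r(t)|<1/2$ on any time interval $[0,T]$ with $T \lesssim e^{c/\varepsilon}$.

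Next, on such an interval, compute
$$\frac{d}{dt}\,\tfrac12|\xi(t)-\bar\xi|^{2}=(\xi-\bar\xi)\,\theta^\varepsilon(\xi)\,(1+r(t)).$$
Assumption \eqref{altripth2} forces $(\xi-\bar\xi)\theta^\varepsilon(\xi)<0$ for $\xi\neq\bar\xi$, while $1+r\geq 1/2$; hence $|\xi(t)-\bar\xi|$ is strictly decreasing, $\xi(t)$ remains in a compact subset of $I$ (where Theorem \ref{mainthe} applies uniformly through the uniform bound $\bar C$ of \textbf{H4}), and a standard monotone-limit/LaSalle argument yields $\xi(t)\to\xi_\infty$ for some $\xi_\infty\in\overline{I}$. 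The local hypothesis ${\theta^\varepsilon}'(\bar\xi)<0$ together with \eqref{altripth2} shows that $\bar\xi$ is the unique zero of $\theta^\varepsilon$, so $\xi_\infty=\bar\xi$; moreover, once $\xi$ enters a neighbourhood of $\bar\xi$, the linearization $\theta^\varepsilon(\xi)\approx {\theta^\varepsilon}'(\bar\xi)(\xi-\bar\xi)$ gives exponential convergence at rate $|{\theta^\varepsilon}'(\bar\xi)|(1-|r|)$. Iterating the window argument (each step improves the estimate on $|Y|_{L^2}$ since $\xi$ gets closer to $\bar\xi$ and thus $|H^\varepsilon|_{L^\infty}$ shrinks) yields global convergence.

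\emph{Main obstacle.} The delicate point is the linear-in-$t$ growth of the bound \eqref{stimafinaleY}: taken at face value, $|r(t)|$ could eventually exceed $1$. Closing the argument requires exploiting the fact that the natural drift time scale for $\xi$ is $1/|\theta^\varepsilon|\sim e^{c/\varepsilon}$, on which the product $|\Omega_i^\varepsilon|_{L^\infty}\,t$ is only $\mathcal O(1)$. A rescaling $\tau=|\theta^\varepsilon|\,t$, combined with a continuation/bootstrap argument that uses the monotone decrease of $|\xi-\bar\xi|$ gained at each step to refresh the estimate on $|Y|_{L^2}$, is the technical heart of the proof; everything else is standard ODE analysis supported by \textbf{H1--H4}.
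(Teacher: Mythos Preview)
Your proposal is correct and follows essentially the same strategy as the paper: both treat the $\xi$-equation as $\dot\xi=\theta^\varepsilon(\xi)(1+r)$ with $r$ controlled by the $L^2$-estimate on $Y$ from Theorem~\ref{mainthe}, and both conclude via the sign hypothesis \eqref{altripth2} together with the linearization $\theta^\varepsilon(\xi)\approx{\theta^\varepsilon}'(\bar\xi)(\xi-\bar\xi)$ near the equilibrium. The only cosmetic difference is that the paper separates variables, writing $\int_{\xi_0}^{\xi(t)}dz/\theta^\varepsilon(z)=\int_0^t(1+r(s))\,ds$ and comparing directly with the reduced flow $\dot\eta=\theta^\varepsilon(\eta)$, whereas you use the Lyapunov quantity $\tfrac12|\xi-\bar\xi|^2$; both routes yield the same exponential rate $\beta^\varepsilon\sim{\theta^\varepsilon}'(\bar\xi)$. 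If anything, you are more explicit than the paper about the linear-in-$t$ term in \eqref{stimafinaleY} and the need for a bootstrap on the drift time scale $1/|\theta^\varepsilon|$; the paper handles this point only heuristically by asserting that the perturbed dynamics has ``similar decay properties'' to the reduced problem.
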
       

 \begin{proof}    

Due to the estimate \eqref{stimafinaleY}, for $\varepsilon$ and $|Y_0|_{{}_{L^2}}$ sufficiently small and  for any initial datum $\xi_0$, the location of the shock layer satisfies 
\begin{equation}\label{estimateoneta}
\int_{\xi_0}^{\xi(t)} \frac{dz}{\theta^\varepsilon(z)}=\int_0^t(1+r(s))ds,
\end{equation}
where
\begin{equation*}
 |r(t)| \leq \left[ C_1|\Omega_1^\varepsilon|_{{}_{L^\infty}}+C_2|\Omega^\varepsilon_2|_{{}_{L^\infty}} \right] t+e^{-\mu^\varepsilon t}|Y_0|_{L^2}.
\end{equation*}
More precisely,  in the regime of small $\varepsilon$, the shock location $\xi(t)$ has similar decays properties to those of the solution to the following reduced problem
\begin{equation}\label{eqxiapprossi}
\frac{d\eta}{dt}=\theta^\varepsilon(\eta), \quad \eta(0)=\xi(0) \ ,\qquad {\rm with} \,\,\, \theta^\varepsilon(\eta)=\frac{\langle \boldsymbol \psi^\varepsilon_1,\mathcal F[\textbf{W}^\varepsilon] \rangle}{\langle \boldsymbol \psi^\varepsilon_1, \partial_\eta \textbf{W}^\varepsilon \rangle}.
\end{equation}
By means of a standard method of separation of variable, we get
\begin{equation*}
\int_{\xi_0}^{\xi} \frac{d\xi}{\theta^\varepsilon(\xi)} = \int_0^t \, dt. 
\end{equation*}
Since $\theta^\varepsilon(\xi) \sim{ \theta^\varepsilon}'(\bar\xi)(\xi -\bar \xi) $, by integrating we obtain the following estimate for the shock layer location 
\begin{equation}\label{metastabxi}
|\xi(t)-\bar \xi| \leq |\xi_0|e^{\beta^\varepsilon t}, \quad \beta^\varepsilon \sim{ \theta^\varepsilon}'(\bar\xi),
\end{equation}
where $\bar \xi$ represent the equilibrium location for the shock layer position and ${ \theta^\varepsilon}'(\bar \xi) \to 0$ as $\varepsilon \to 0$. Therefore $\xi$ converges to $\bar \xi$ as $t \to +\infty$, and  the convergence is exponential for any $t$ under consideration.

\end{proof} 

Formula \eqref{metastabxi} shows the slow motion of the shock layer for small $\varepsilon$. Precisely, the evolution of the collocation of the shock towards the equilibrium position is much slower as $\varepsilon$ becomes smaller. 

For example, when $f(u)=u^2/2$, $\bar \xi =0$ and ${ \theta^\varepsilon}'(0)\sim -e^{-1/\varepsilon}$  (see formula \eqref{stimasutheta}).  We also emphasize that hypotheses \eqref{altripth2} are verified in the case of the Jin-Xin system with $f(u)=u^2/2$.

The following table shows a numerical computation for the location of the shock layer for different values of the parameter $\varepsilon$ and $f(u)=u^2/2$. The initial datum for the function $u$ is $u_0(x)=\frac{1}{2}x^2-x-\frac{1}{2}$. We can see that the convergence to $\bar \xi =0$ is slower as $\varepsilon$ becomes smaller.

\begin{table}[h!]
\footnotesize{The numerical location of the shock layer $\xi(t)$ for different values of the parameter $\varepsilon$}
\begin{center}
\begin{tabular}{|c|c|c|c|c|c|}
\hline TIME $t$ &   $\xi(t)$, $\varepsilon=0.1$  &   $\xi(t)$, $\varepsilon=0.07$ &  $\xi(t)$, $\varepsilon=0.055$ &   $\xi(t)$, $\varepsilon=0.04$ &  $\xi(t)$, $\varepsilon=0.02$
\\ \hline
\hline $0.2$ Ê& $-0.4008$ &$-0.4020$  &$-0.4029$ &$-0.4040$ &$-0.4059$\\
\hline  $1$ & $-0.3314$ & $-0.3345$  &$-0.3360$ &$-0.3374$ & $-0.3389$\\
\hline $10$ & $-0.3070$ &$-0.3263$ &$-0.3304$  &$-0.3320$ & $-0.3326$\\
\hline $10^3$ & $-0.0103$ &$-0.1600$  &$-0.2562$ &$-0.3181$ & $-0.3325$\\
\hline $10^4$ & $-1.9725*10^{-12}$ &$-0.0084$ &$-0.1115$ &$-0.2531$& $-0.3320$\\
\hline $0.5*10^6$ & $-1.9725*10^{-12}$ &$-2.2102*10^{-11}$ &$-1.5057*10^{-10}$ &$-0.0379$ & $-0.3099$\\
\hline\end{tabular}
\end{center}

\end{table}

Figure \ref{fig4} shows the dynamics of the shock layer (i.e the dynamics of the solution $u$ to \eqref{JX2}), obtained numerically. When $\varepsilon=0.1$, the shock layer location converges to zero very fast: as we can also see from the table, when $t=10^3$, the value of $\xi(t)$ is already very close to zero. On the other hand, when $\varepsilon$ becomes smaller the shock layer location moves slower and it approaches the equilibrium location only for very large $t$.
Finally, Figure \ref{fig5} shows the profile of the shock layer for the flux function $f(u)=u^4/4$, that still verifies hypotheses \eqref{ipof}.

\begin{figure}
\centering
\includegraphics[width=7cm,height=7cm]{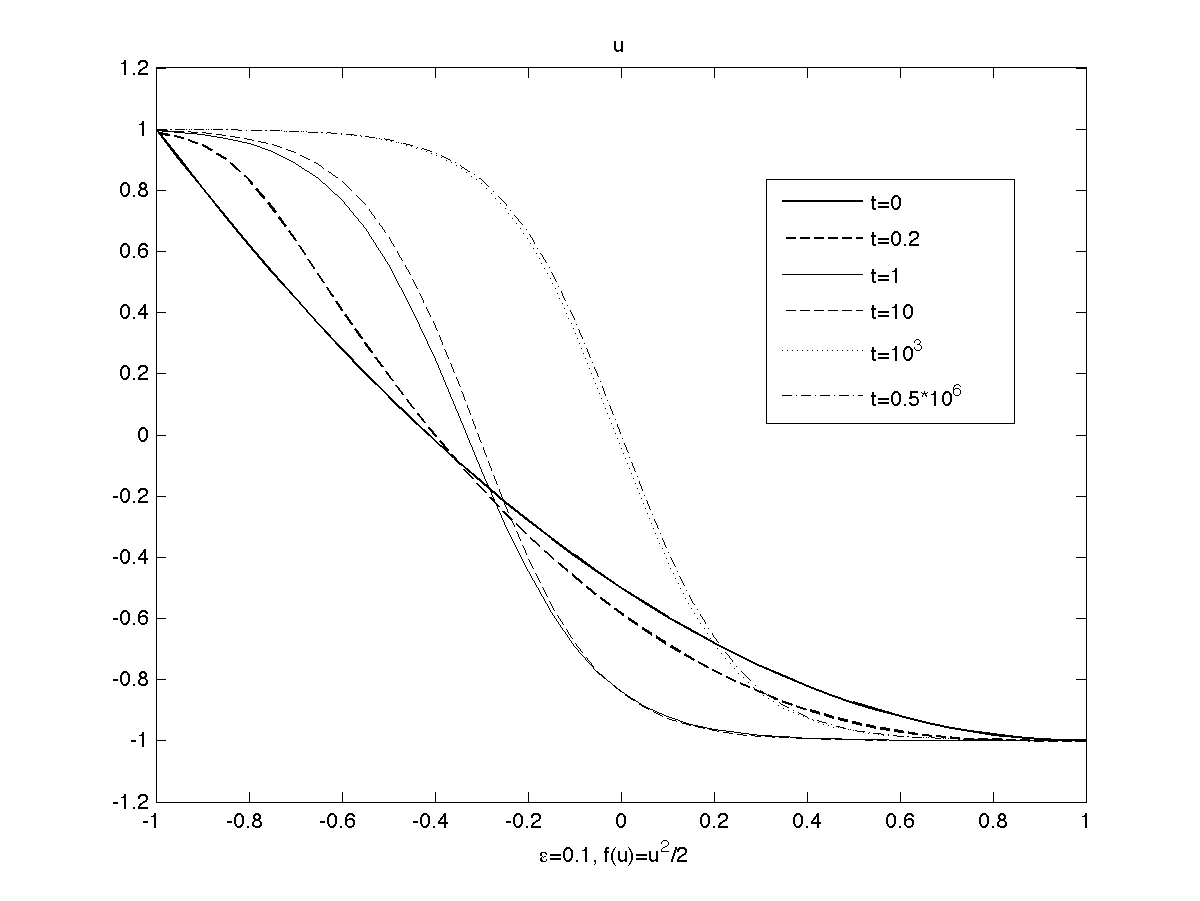}
 \hspace{3mm}
\includegraphics[width=7cm,height=7cm]{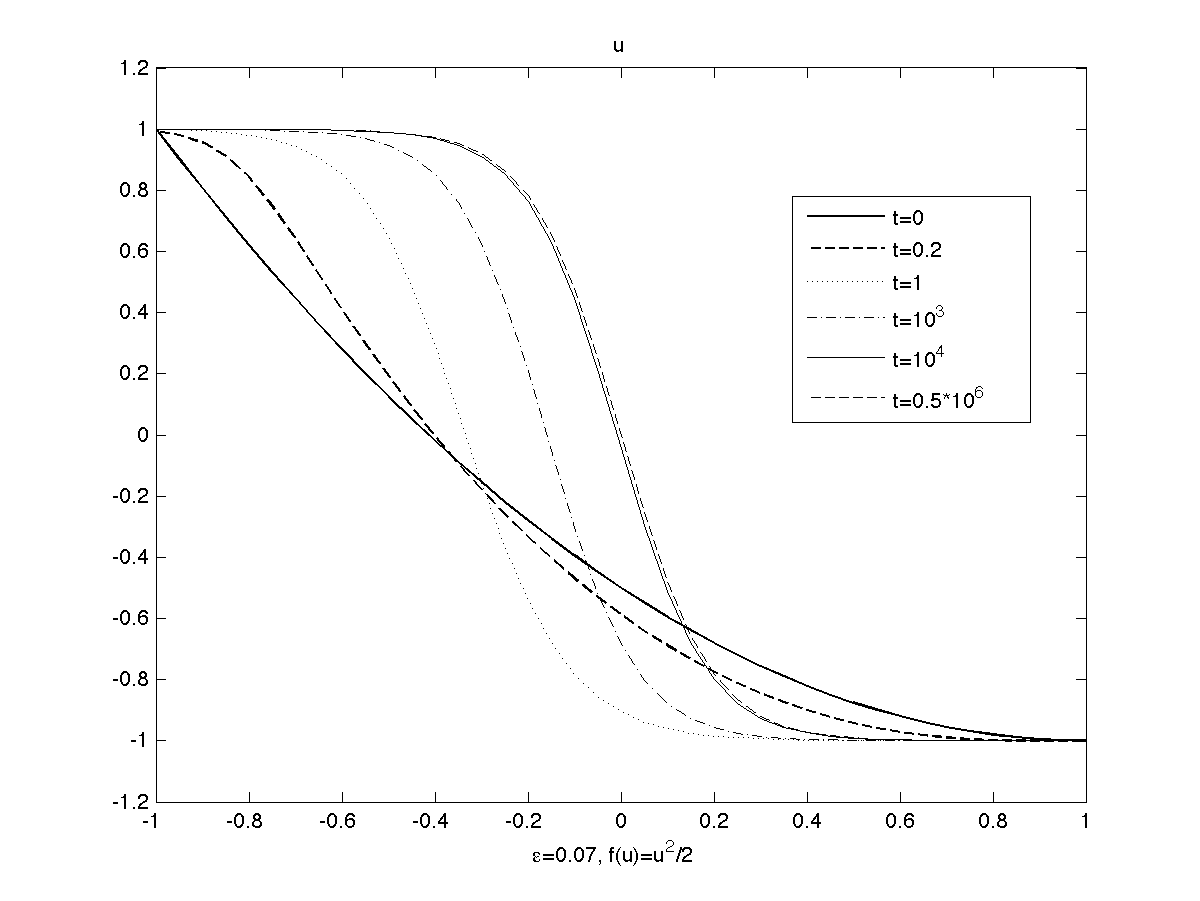}f
 \hspace{3mm}
\includegraphics[width=7cm,height=7cm]{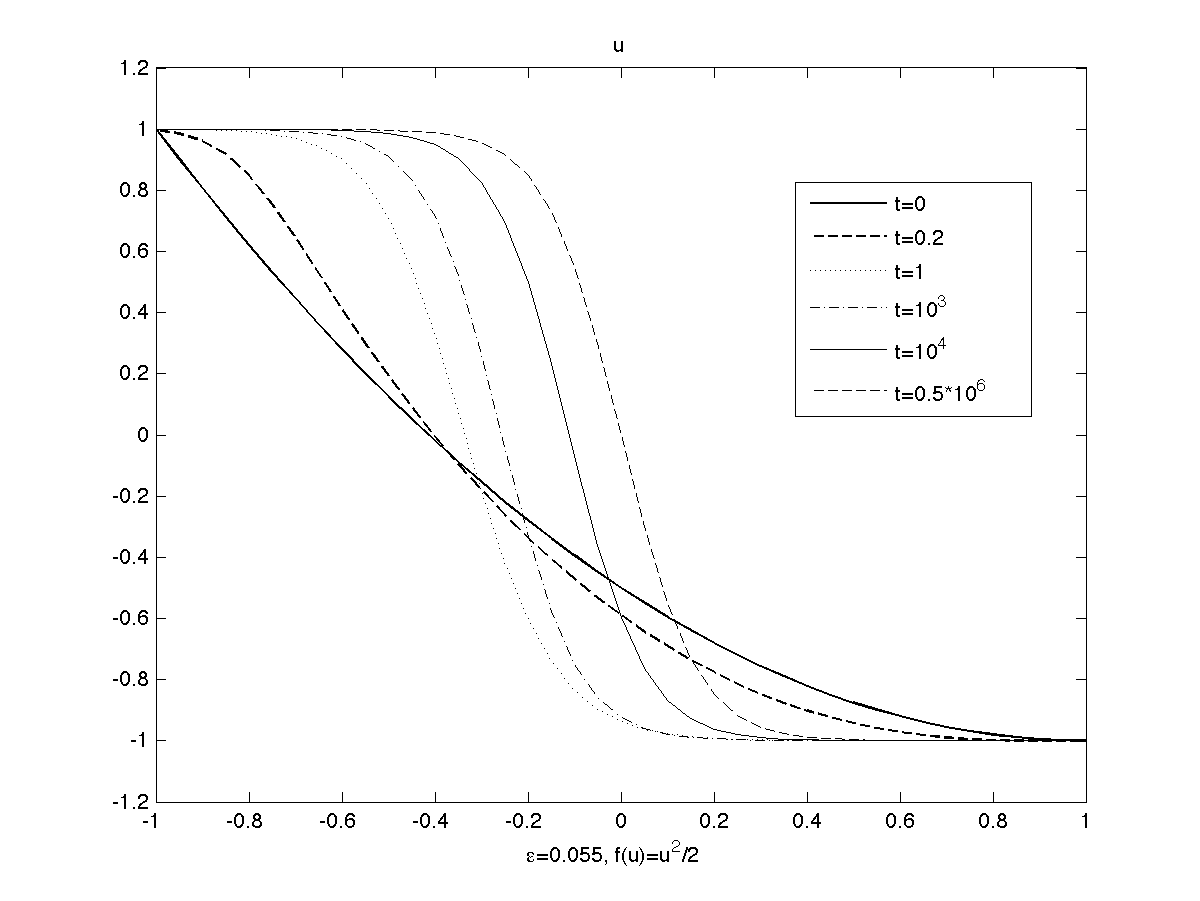}
 \hspace{3mm}
\includegraphics[width=7cm,height=7cm]{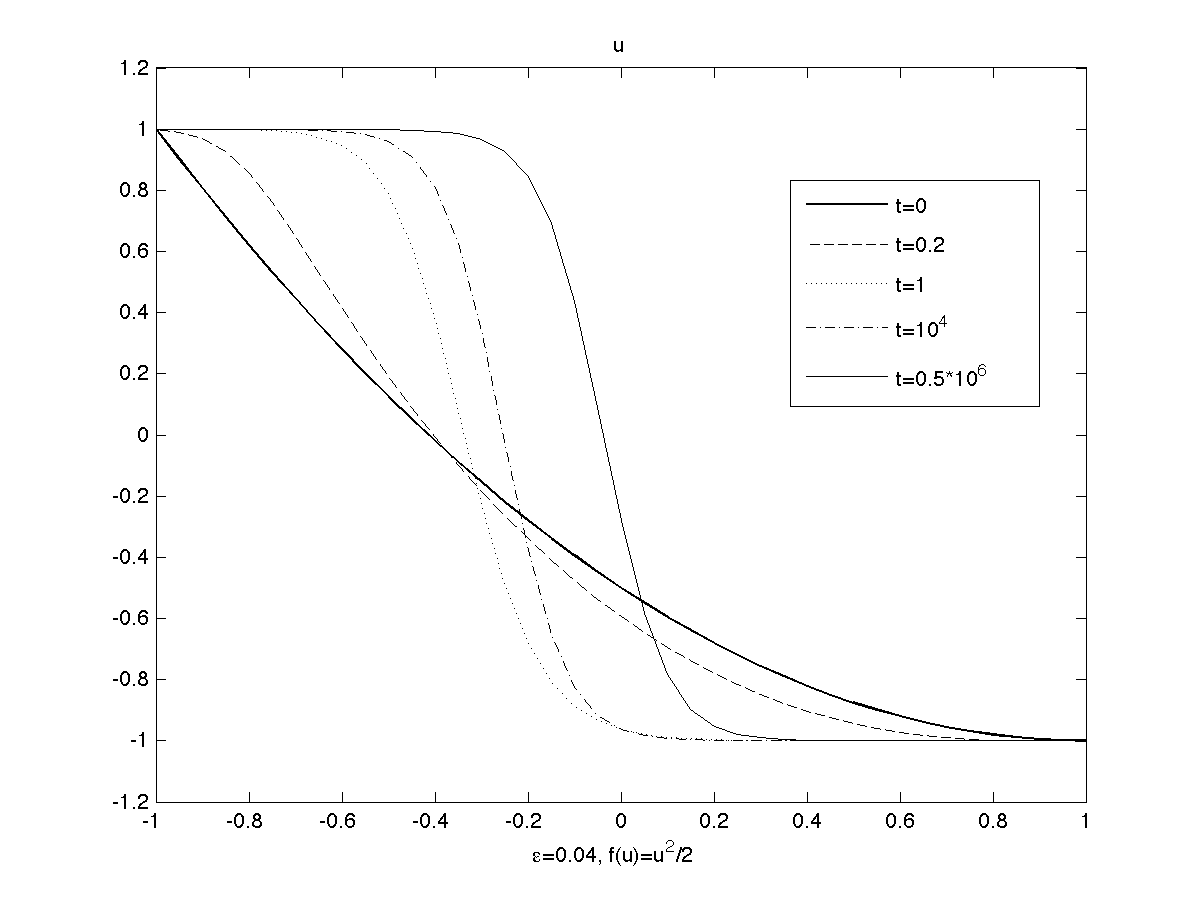}
\caption{\small{The shock layer profiles for different times and different values of the parameter $\varepsilon$. }}\label{fig4}
\end{figure}

\begin{figure}[ht]
\centering
\includegraphics[width=16cm,height=13cm]{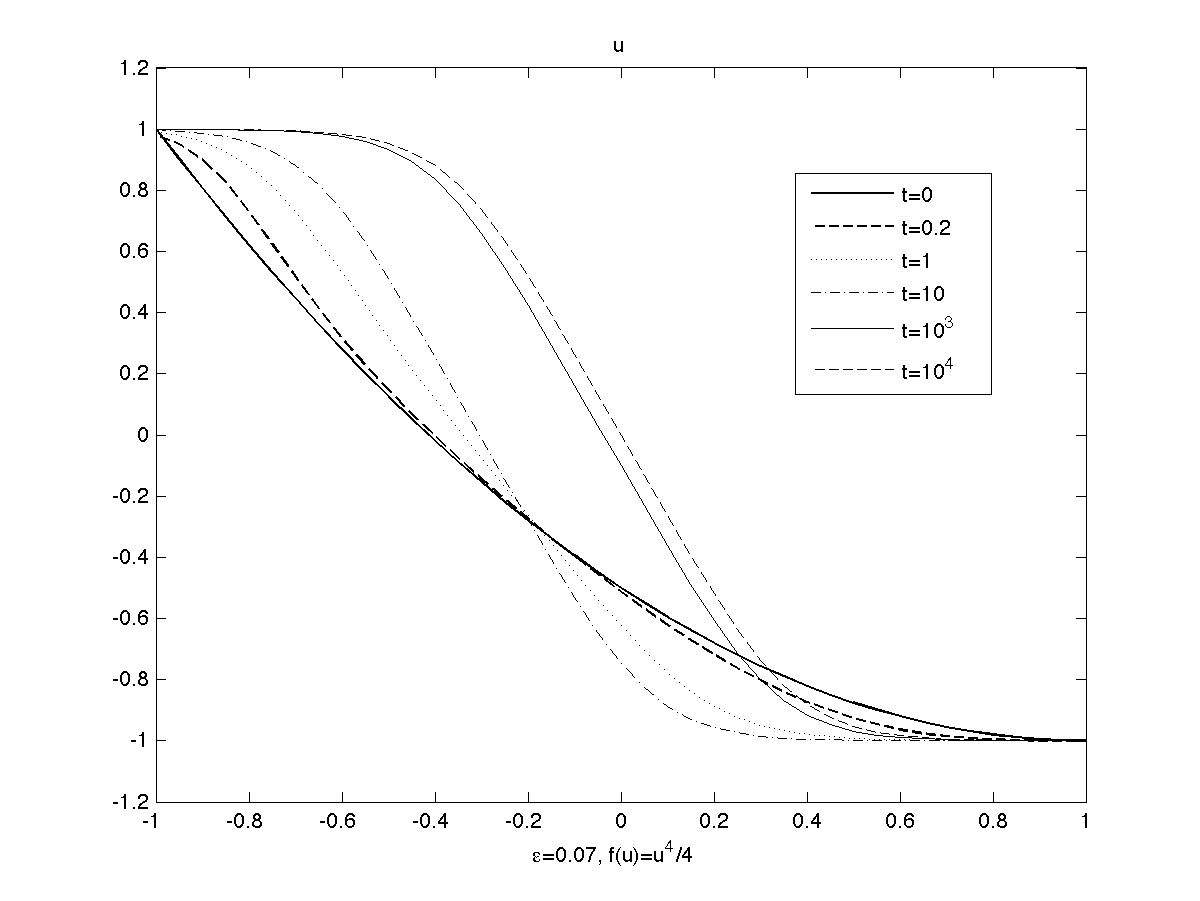}
\caption{\small{Profiles of the shock layer at different times with the convex flux function $f(u)=u^4/4$.}}\label{fig5}
\end{figure}

\section{Appendix A}

In this section, we briefly review some results on the theory of evolution systems by A. Pazy \cite[Chapter 5]{Pazy83}. For more details and for the proofs of the Theorems, see \cite[Theorem 2.3, Theorem 3.1, Theorem 4.2]{Pazy83}.

Let $X$ be a Banach space. For every $0 \leq t \leq T$, let $A(t): D(A(t)) \subset X \to X$ be a linear operator in $X$ and let $f(t)$ be an $X$ valued function. Let us consider the initial value problem 
\begin{equation}\label{pazy}
\partial_t u = A(t) u+ f(t), \quad u(s)=u_0, \qquad 0 \leq s \leq t \leq T.
\end{equation}
In the special case where $A(t)=A$ is independent of $t$, the solution to \eqref{pazy} can be represented via the formula of {\it variations of constants}
\begin{equation*}
u(t)= T(t)u_0+ \int_0^t T(t-s)f(s) \ ds
\end{equation*}
where $T(t)$ is the $C_0$ semigroup generated by $A$. In \cite{Pazy83} it is shown that a similar representation formula is true also when $A(t)$ depends on time.
\begin{ans}\label{def1}
Let $X$ a Banach space. A family $\{ A(t)\}_{t \in [0,T]}$ of infinitesimal generators of $C_0$ semigroups on $X$ is called stable if there are constants $M \geq 1$ and $\omega$ (called the stability constants) such that
\begin{equation*}
(\omega,+\infty) \subset \rho(A(t)) \quad {\rm for} \quad t \in [0,T]
\end{equation*}
and
\begin{equation*}
\left \| \Pi_{j=1}^k R(\lambda: A(t_j))\right \|  \leq M(\lambda-\omega)^{-k}, \quad {\rm} 
\end{equation*}
for $\lambda>\omega$ and for every finite sequence $0 \leq t_1 \leq t_2, . . . . , t_k \leq T$, $k=1,2,....$.
\end{ans}
\begin{remark}\label{rem1}{\rm
If for $t \in [0,T]$, $A(t)$ is the infinitesimal generator of a $C_0$ semigroup $S_t(s)$, $s \geq 0$ satisfying $\| S_t(s)\| \leq e^{\omega s}$, then the family $\{ A(t)\}_{t \in [0,T]}$ is clearly stable with constants $M=1$ and $\omega$.
}
\end{remark}

The previous remark means that, if for every fixed $t \in [0,T]$ the operator $A(t)$ generates a $C_0$ semigroup $S_t(s)$, and we can find an estimate for $\| S_t(s)\|$ that is independent of $t$, then the whole family $\{ A(t)\}_{t \in [0,T]}$ is stable in the sense of Definition \ref{def1}.

\begin{theorem}\label{thpaz1}
Let $\{ A(t)\}_{t \in [0,T]}$ be a stable family of infinitesimal generators with stability constants $M$ and $\omega$. Let $B(t)$, $0 \leq t \leq T$ be a bounded linear operators on $X$. If $\| B(t)\| \leq K$ for all $t \leq T$, then $\{ A(t)+ B(t)\}_{t \in [0,T]}$ is a stable family of infinitesimal generators with stability constants $M$ and $\omega+ MK$.
\end{theorem}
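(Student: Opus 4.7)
The plan is to verify both parts of Definition \ref{def1} for the perturbed family $\{A(t)+B(t)\}$, starting from the corresponding properties of $\{A(t)\}$ and exploiting the uniform bound on $B(t)$. The tool throughout is the resolvent identity combined with an explicit Neumann expansion, and the key combinatorial subtlety is how to count runs of consecutive resolvent factors.

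First I would handle the case $k=1$ in order to locate the resolvent set and set up the key identity. Applying Definition \ref{def1} with $k=1$ to $\{A(t)\}$ yields $\|R(\lambda;A(t))\|\le M/(\lambda-\omega)$ for $\lambda>\omega$, so $\|R(\lambda;A(t))B(t)\|\le MK/(\lambda-\omega)<1$ as soon as $\lambda>\omega+MK$. A standard argument then gives $(\omega+MK,\infty)\subset\rho(A(t)+B(t))$ together with the Neumann expansion
\[
R(\lambda;A(t)+B(t))=\sum_{n=0}^{\infty}\bigl[R(\lambda;A(t))B(t)\bigr]^{n}R(\lambda;A(t)).
\]
Since each $A(t)$ is already the generator of a $C_0$-semigroup, the classical bounded perturbation theorem also ensures that $A(t)+B(t)$ is.

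The core step is the product estimate for $k$ resolvents. Fix an ordered sequence $0\le t_{1}\le\cdots\le t_{k}\le T$ and $\lambda>\omega+MK$, and substitute the Neumann expansion into every factor of $\prod_{j=1}^{k}R(\lambda;A(t_{j})+B(t_{j}))$. Distributing yields a multiple sum over $(n_{1},\dots,n_{k})\in\mathbb{N}^{k}$ whose generic summand is an alternating word built from the resolvents $R(\lambda;A(t_{j}))$ and the bounded perturbations $B(t_{j})$, starting and ending with a resolvent. The decisive structural remark is that inside the ``block'' of index $j$ the resolvents are separated by copies of $B(t_{j})$, whereas at each of the $k-1$ inter-block junctions two resolvents sit side by side without any $B$ between them. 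Since no two $B$'s are ever adjacent, a short counting gives that the word decomposes into exactly $1+\sum_{j}n_{j}$ maximal runs of consecutive resolvents, and the times inside each run form a non-decreasing sequence drawn from $(t_{1},\dots,t_{k})$.

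I would then apply the stability hypothesis on $\{A(t)\}$ \emph{run by run}, rather than factor by factor: each run of length $m$ is bounded by $M(\lambda-\omega)^{-m}$. Combined with $\|B(t_{j})\|\le K$, the norm of a generic summand is at most
\[
M^{\,1+\sum_{j}n_{j}}\,K^{\sum_{j}n_{j}}\,(\lambda-\omega)^{-(k+\sum_{j}n_{j})}.
\]
Summing the geometric series in each $n_{j}$, which converges precisely because $\lambda>\omega+MK$, then produces
\[
\Bigl\|\prod_{j=1}^{k}R(\lambda;A(t_{j})+B(t_{j}))\Bigr\|\le M(\lambda-\omega-MK)^{-k},
\]
i.e.\ exactly the stability estimate of Definition \ref{def1} with constants $M$ and $\omega+MK$.

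The main obstacle, and the only nontrivial point, is the run-counting step. A naive estimate bounding every resolvent factor separately by $M/(\lambda-\omega)$ would produce $M^{k+\sum_{j}n_{j}}$ in place of $M^{1+\sum_{j}n_{j}}$, and after summation one would only get the strictly weaker constants $M^{k}$ and $\omega+MK$. Recovering the sharp constant $M$ requires invoking the multi-resolvent bound from Definition \ref{def1} on the adjacent pairs---or longer chains, when consecutive $n_{j}$'s vanish---that appear at the inter-block junctions; this is where stability of the full family $\{A(t)\}$, as opposed to mere Hille--Yosida bounds on the individual generators, plays an essential role.
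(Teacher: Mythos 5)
Your argument is correct and is precisely the standard proof of this perturbation result (Theorem 2.3 of Chapter 5 in \cite{Pazy83}), which the paper states without proof and simply attributes to that reference. In particular, your run-counting step --- applying the stability bound of Definition \ref{def1} to each maximal block of adjacent resolvents arising at the junctions of the Neumann expansions, so as to collect only one factor of $M$ per run before summing the geometric series --- is exactly the mechanism used in the cited source and is what yields the sharp constants $M$ and $\omega+MK$.
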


In order to prove the existence of the so called {\it evolution system} $U(t,s)$ for the initial value problem \eqref{pazy}, let us introduce two Banach spaces $X$ and $Y$, with norms $\| \ \|_X$, $\| \ \|_Y$ respectively. Moreover, let us assume that $Y$ is a dense subspace of $X$ and that there exists a constant $C$ such that $\|w \|_X \leq C \| w\|_Y$ for all $w \in Y$. 
\begin{ans}
Let $A$ be the infinitesimal generator of a $C_0$ semigroup $S(s)$, $s \geq 0$, on $X$. $Y$ is called $A$-{\it admissible} if it is an invariant subspace of $S(s)$, and the restriction $\tilde S(s)$ of $S(s)$ to $Y$ is a $C_0$ semigroup on $Y$. Moreover, the infinitesimal generator of the semigroup $\tilde S(s)$ on $Y$, denoted here with  $\tilde A$, is called the {\it part of $A$ in $Y$}.
\end{ans}

Next, let us fix $t \in [0,T]$, and let $A(t)$ be the infinitesimal generator of a $C_0$ semigroup $S_t(s)$ on $X$. The following assumptions are made
  \vskip0.2cm
 (\textbf{H1}) $\{ A(t)\}_{t \in [0,T]}$ is a stable family with stability constants $M$ and $\omega$.
 \vskip0.2cm
 (\textbf{H2}) Y is $A(t)$-admissible for $t\in[0,T]$ and the family $\{ \tilde A(t)\}_{t \in [0,T]}$ is a stable family in $Y$ with stability constants $\tilde M$, $\tilde \omega$.
 \vskip0.2cm
 (\textbf{H3}) For $t \in [0,T]$, $Y \subset D(A(t))$, $A(t)$ is a bounded operator from $Y$ into $X$ and $t \to A(t)$ in continuous in the $B(X,Y)$ norm.
 
 \begin{remark}{ \rm
 The assumption that the family $\{ A(t)\}_{t \in[0,T]}$ satisfies (\textbf{H2}) is not always easy to check. A sufficient condition for (\textbf{H2}) which can be effectively checked in many applications states that (\textbf{H2}) holds if there is a family $\{ Q(t)\}$ of isomorphisms of $Y$ onto $X$ such that $\| Q(t) \|_{Y \to X}$ and $\| Q(t)^{-1} \|_{Y \to X}$ are uniformly bounded and $t \to Q(t)$ is of bounded variation in the $B(Y,X)$ norm (for more details, see \cite[Chapter 5]{Pazy83}).
 
 }
 \end{remark}
 
 \begin{remark}{\rm
 Condition  (\textbf{H3}) can be replaced by the weaker condition
 \vskip0.2cm
  (\textbf{H3})' For $t\in[0,T]$, $Y \subset D(A(t))$ and $A(t) \in L^1([0,T];B(Y,X))$.
 }
 \end{remark}

\begin{theorem}\label{thpaz2} 
Let $A(t)$, $0 \leq t \leq T$ be the infinitesimal generator of a $C_0$ semigroup $S_t(s)$, $s \geq 0$ on $X$. If the family $\{ A(t)\}_{t \in [0,T]}$ satisfies the conditions
(\textbf{H1})-(\textbf{H3}), then there exists a unique {\it evolution system} $U(t,s)$, $0 \leq s \leq t \leq T$, in $X$ satisfying
 \begin{equation}\label{01}
 \| U(t,s) \| \leq Me^{\omega (t-s)}, \quad {\rm for } \quad 0 \leq s \leq t \leq T.
 \end{equation}
 Moreover, if $f \in C([s,T],X)$, the solution to \eqref{pazy} can be written as
 \begin{equation}\label{02}
 u(t)= U(t,s)u_0 + \int_s^t U(t,r) f(r) \ dr,
 \end{equation}
for all $0 \leq s \leq t \leq T$.
\end{theorem}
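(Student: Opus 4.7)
The plan is to construct the evolution system $U(t,s)$ by approximating the time-dependent generator $A(\cdot)$ with piecewise-constant operator-valued functions, following the classical Kato--Pazy strategy. First I would fix a sequence of partitions $\pi_n = \{0 = t_0^n < t_1^n < \cdots < t_{N_n}^n = T\}$ with mesh $|\pi_n| \to 0$ and set $A_n(t) = A(t_k^n)$ for $t \in [t_k^n, t_{k+1}^n)$. Since on each subinterval the generator is time-independent, the natural approximating evolution system is the ordered product
\begin{equation*}
U_n(t,s) = S_{t_k^n}(t - t_k^n)\, S_{t_{k-1}^n}(t_k^n - t_{k-1}^n) \cdots S_{t_j^n}(t_{j+1}^n - s)
\end{equation*}
for $s \in [t_j^n,t_{j+1}^n)$ and $t \in [t_k^n,t_{k+1}^n)$ with $j \leq k$. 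Stability (\textbf{H1}) together with Definition \ref{def1} immediately gives the uniform $X$-bound $\|U_n(t,s)\| \leq M e^{\omega(t-s)}$, and the analogous bound on $Y$ follows from (\textbf{H2}).

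The heart of the argument is to show that, for each $y \in Y$, the sequence $U_n(t,s)y$ is Cauchy in $X$. For $n,m$ large I would use the Duhamel-type telescoping identity
\begin{equation*}
U_n(t,s)y - U_m(t,s)y = \int_s^t U_n(t,r)\bigl(A_n(r) - A_m(r)\bigr)U_m(r,s)y\,dr,
\end{equation*}
and estimate its right-hand side using (\textbf{H3}): since $t \mapsto A(t)$ is continuous in the $B(Y,X)$ norm, the quantity $\|A_n(r) - A_m(r)\|_{B(Y,X)}$ tends to zero uniformly in $r$ as $|\pi_n|,|\pi_m| \to 0$, while $U_m(r,s)y$ remains bounded in $Y$ by the $Y$-stability. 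This allows me to define $U(t,s)y := \lim_n U_n(t,s)y$ for $y \in Y$ and then extend by density to all of $X$, with the uniform bound yielding both the extension and inequality \eqref{01} in the limit.

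The remaining steps are more routine. Passing to the limit in $U_n(s,s) = I$ and in the cocycle identity $U_n(t,r)U_n(r,s) = U_n(t,s)$ yields the evolution-system axioms, and strong continuity of $(t,s) \mapsto U(t,s)x$ on $X$ follows from the uniform $X$-bound combined with strong continuity on the dense subspace $Y$. For the representation formula \eqref{02}, I would check that for $u_0 \in Y$ the map $t \mapsto U(t,s)u_0$ is strongly differentiable with $\partial_t U(t,s)u_0 = A(t) U(t,s)u_0$, a property transferred from $U_n$ via the uniform estimates; the forcing term $\int_s^t U(t,r) f(r)\,dr$ is then handled by the Leibniz rule, with $f \in C([s,T];X)$ ensuring the required regularity.

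The hard part will be closing the Cauchy estimate in the second paragraph: one must simultaneously exploit the $X$-stability (to bound $U_n(t,r)$), the $Y$-stability (to keep $U_m(r,s)y$ bounded in $Y$), and the $B(Y,X)$-continuity of $A(\cdot)$ (to make the integrand small), and this is precisely why hypothesis (\textbf{H2}) on the admissible subspace $Y$ is needed in addition to the $X$-stability (\textbf{H1}). A secondary difficulty is identifying $A(t)U(t,s)u_0$ with the strong derivative rather than with a merely mild-solution object, which requires closedness of $A(t)$ together with the $Y$-continuity of $r \mapsto U_n(r,s) u_0$ in order to pass to the limit inside the unbounded operator $A(t)$.
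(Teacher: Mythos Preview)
The paper does not prove this theorem: it is stated in Appendix~A as a quoted result from Pazy's monograph, with the proof explicitly deferred there (``For more details and for the proofs of the Theorems, see \cite[Theorem 2.3, Theorem 3.1, Theorem 4.2]{Pazy83}''). Your sketch is precisely the classical Kato--Pazy construction carried out in that reference---piecewise-constant approximation, stability-based uniform bounds, the telescoping Cauchy estimate exploiting (\textbf{H1})--(\textbf{H3}), and passage to the limit---so there is nothing to compare against in the paper itself, and your outline is consistent with the cited source.
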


One special case in which the conditions of Theorem \ref{thpaz2} can be easily checked is the case where the domain of the operator $D(A(t)) \equiv D$ is independent on $t$. In this case we can take $D$ as the Banach space which we denote by $Y$, and the following Theorem holds
\begin{theorem}\label{thpaz3}
Let $\{ A(t)\}_{t \in [0,T]}$ be a stable family of infinitesimal generators of $C_0$ semigroups on $X$. If $D(A(t))=D$ is independent on $t$ and for $u_0 \in D$, $A(t)u_0$ is continuously differentiable in $X$, then there exists a unique evolution system $U(t,s)$, $0 \leq s \leq t \leq T$, satisfying \eqref{01}. Morevoer, if $f \in C([s,T],X)$, then, for every $u_0 \in D$, the initial value problem \eqref{pazy} has a unique solution given by \eqref{02}.
\end{theorem}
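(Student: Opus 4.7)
The plan is to deduce the statement from Theorem~\ref{thpaz2} by producing an auxiliary Banach space $Y\subset X$ that fulfills hypotheses (\textbf{H2})--(\textbf{H3}); the constant-domain structure makes this choice natural. First I would equip $D$ with the graph norm of $A(0)$,
\[
\|u\|_Y:=\|u\|_X+\|A(0)u\|_X,
\]
and set $Y:=(D,\|\cdot\|_Y)$. Since $A(0)$ is closed as the infinitesimal generator of a $C_0$ semigroup on $X$, $(Y,\|\cdot\|_Y)$ is a Banach space, and it is densely and continuously embedded in $X$ because $D=D(A(0))$ is dense in $X$ and $\|u\|_X\le\|u\|_Y$.

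Next, I would verify (\textbf{H3}). The inclusion $Y=D\subset D(A(t))$ is free from the hypothesis; moreover, each $A(t)$ is closed on all of $Y$, so the closed graph theorem yields $A(t)\in B(Y,X)$ for every $t\in[0,T]$. The continuous differentiability hypothesis is what gives norm continuity of $t\mapsto A(t)$ in $B(Y,X)$: for every $u\in D$ the map $t\mapsto A(t)u$ is in $C^{1}([0,T];X)$, so applying the Banach--Steinhaus theorem to the difference quotients $(A(t)-A(s))/(t-s)\in B(Y,X)$ yields a uniform constant $K>0$ with
\[
\|A(t)-A(s)\|_{B(Y,X)}\leq K\,|t-s|,\qquad 0\le s,t\le T.
\]

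For (\textbf{H2}) I would invoke the sufficient condition mentioned in the remark following (\textbf{H3}): take $Q(t):=\mu I-A(t):Y\to X$ for some $\mu>\omega$. Stability of $\{A(t)\}$ in $X$ guarantees that $Q(t)$ is a bounded isomorphism with $\|Q(t)^{-1}\|_{B(X,Y)}$ uniformly bounded, while the inclusion $A(t)\in B(Y,X)$ and the Lipschitz estimate above bound $\|Q(t)\|_{B(Y,X)}$ uniformly and show that $t\mapsto Q(t)$ is of bounded variation in $B(Y,X)$. This secures $A(t)$-admissibility of $Y$ and stability of the restricted family $\{\tilde A(t)\}$ in $Y$ with some constants $\tilde M,\tilde\omega$.

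Having verified all hypotheses of Theorem~\ref{thpaz2}, I conclude the existence of a unique evolution system $U(t,s)$, $0\le s\le t\le T$, satisfying the bound \eqref{01}; and for $u_0\in D$ and $f\in C([s,T],X)$ the representation formula \eqref{02} furnishes the unique classical solution of \eqref{pazy}. The main obstacle is the admissibility verification (\textbf{H2}): transferring the Hille--Yosida resolvent estimates from $X$ to $Y$ uniformly in $t$ is delicate because the resolvents $R(\lambda,A(t))$ at different times do not commute in general, and this is exactly where bounded variation of $Q(t)$---rather than mere continuity---is indispensable.
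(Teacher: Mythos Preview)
The paper does not prove Theorem~\ref{thpaz3} at all: it is stated in Appendix~A as part of a brief review of Pazy's theory, with the explicit disclaimer that ``for more details and for the proofs of the Theorems, see \cite[Theorem 2.3, Theorem 3.1, Theorem 4.2]{Pazy83}.'' So there is no proof in the paper for you to compare against.

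That said, your sketch is essentially the argument Pazy gives in \cite[Chapter 5]{Pazy83}: one takes $Y=D$ equipped with the graph norm, verifies (\textbf{H3}) via the closed graph theorem and a uniform-boundedness argument on the difference quotients, and checks (\textbf{H2}) through the isomorphisms $Q(t)=\mu I-A(t)$ for a fixed $\mu>\omega$. One point deserves a bit more care: the uniform bound on $\|Q(t)^{-1}\|_{B(X,Y)}$ is not immediate from stability alone, since stability only controls $\|R(\mu,A(t))\|_{B(X,X)}$. You need the identity $A(t)R(\mu,A(t))=\mu R(\mu,A(t))-I$ together with the uniform equivalence of the graph norms $\|\cdot\|_X+\|A(t)\cdot\|_X$ on $D$, which in turn follows from the Lipschitz estimate you derived for $t\mapsto A(t)$ in $B(Y,X)$ (possibly after partitioning $[0,T]$ into small subintervals). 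With that in place, the reduction to Theorem~\ref{thpaz2} goes through.
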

\vskip1cm
\section*{Acknowledgments}

I wish to thank C. Mascia  for having introduced me to the problem and for guidance throughout writing the paper.

\vskip3cm

\end{document}